\documentclass[a4paper,12pt]{article}

\usepackage[utf8]{inputenc}
\usepackage{amsthm}
\usepackage[english]{babel}
\usepackage{amsmath, amsfonts, amssymb}
\usepackage{cite}
\usepackage{graphicx}
\usepackage{enumitem}
\usepackage{hyperref}
\usepackage{tikz-cd}
\usepackage{mathtools}
\usepackage{geometry}
\usepackage{mathptmx} % - fonte
\usepackage{extarrows}
\newtheorem{theorem}{Theorem}[section]
\newtheorem{lemma}[theorem]{Lemma}

\newtheorem{proposition}[theorem]{Proposition}

\newtheorem{corollary}[theorem]{Corollary}

\newtheorem{definition}[theorem]{Definition}

\newtheorem*{remark}{Remark}
\newtheorem{example}{Example}
\newtheorem*{teo}{Main Theorem}
\newtheorem*{lemma*}{Lemma}

\newcommand{\Hom}{\textnormal{Hom}}
\newcommand{\Ext}{\textnormal{Ext}}
\newcommand{\ext}{\mathcal{E} \mathit{xt}}
\newcommand{\Nf}{N_{\mathcal{F}}}
\newcommand{\Ng}{N_{\mathcal{G}}}
\newcommand{\Tf}{T_\mathcal{F}}
\newcommand{\Tg}{T_\mathcal{G}}
\newcommand{\Z}{{\mathbb Z}}

\newcommand{\C}{\mathbb{C}}

\newcommand{\p}{{\mathbb{P}^{3}}}
\newcommand{\pn}{{\mathbb{P}^n}}

\newcommand{\op}{{\mathcal O}_{\mathbb{P}^{3}}}
\newcommand{\opn}{{\mathcal O}_{\mathbb{P}^n}}
\newcommand{\ox}{{\mathcal O}_{X}}

\newcommand{\tpb}{{\rm T}{\mathbb{P}^{3}}}
\newcommand\restr[2]{{% we make the whole thing an ordinary symbol
  \left.\kern-\nulldelimiterspace % automatically resize the bar with \right
  #1 % the function
  \littletaller % pretend it's a little taller at normal size
  \right|_{#2} % this is the delimiter
  }}

\title{Distributions with Unstable Tangent Sheaf on \(\p\)}
\date{}
\author{Pedro Pfarrius Barbassa}
\begin{document}
\maketitle

\vspace{0.3cm}
\begin{abstract}
    We study codimension one distributions on the projective three-space, focusing on cases where the tangent sheaf of the distribution is nonsplit and unstable. We relate the order of nonstability to the degree of the induced subfoliation by curves, showing that the order of nonstability is bounded. Moreover, we classify the tangent sheaf of the codimension one distributions that admit a subfoliation by curves of degree 1. In other words, assuming the sheaf is nonsplit, we classify the situations in which the tangent sheaf attains the maximal possible order of nonstability.
\end{abstract}

\tableofcontents

\section{Introduction}
\paragraph{}
A distribution on a smooth complex variety \(X\) is a subsheaf of \(TX\) whose quotient is a torsion-free sheaf. The singularities of a distribution are the points where the quotient fails to be locally free. Among such objects are foliations, which are integrable distributions in the Frobenius sense. Foliation theory is a vast area of algebraic geometry with many different points of view; see \cite{loray2018singular,bogomolov2016rational,araujo2013fano,baum1972singularities}.

In the paper \cite{CCJ}, the authors proved several results concerning codimension one distributions and foliations on \(\p\), relating properties of the tangent sheaf to their singular scheme. They also provided an equivalent definition of families of distributions together with a fine moduli space, which was compared with the one developed in \cite{Federico}. An important observation is that the moduli space of distributions does not require stability of the tangent sheaf. They further established conditions under which a distribution cannot be a foliation. These obstructions were later developed in a more general setting in \cite{calvoandrade2025connectednesssingularsetholomorphic}, where it was shown that, under certain conditions, the higher dimensional component of the singular scheme of a foliation is connected, and foliations on the projective space satisfies those conditions.

Together with \cite{galeano2022codimension}, there is a complete classification of the tangent sheaf of codimension one distributions on \(\p\) of degree \(d \leq 2\). In these cases, the tangent sheaf is split or at least \(\mu\)-semistable, so unstable sheaves that are not split appear only in higher degrees. In fact, they appear naturally when one allows the degree of \(\mathcal{F}\) to be \(d \geq 3\) and the tangent sheaf admits a global section. In such a case, the tangent sheaf either splits or is unstable.

Unstable reflexive sheaves on \(\p\) were studied in \cite{sauer1984nonstable}, were the author defined a new invariant called the order of nonstability. We will apply this notion to the tangent sheaf of a codimension one distributions on \(\p\).

In this paper, we continue the study of codimension one distributions on \(\p\), focusing on the cases where the tangent sheaf is unstable. To measure stability, we consider the minimum value \(t_{\mathcal{F}}\) such that the tangent sheaf admits a global section, as in Definition \ref{menorseçao}. This value is also the minimum degree of a subfoliation by curves admitted by the distribution. As will be shown, such a subfoliation imposes numerical conditions on the codimension one distribution. We may correlate the instability of the tangent sheaf of a codimension one distribution with small values of \(t_{\mathcal{F}}\). In other words, we prove the following result.
\begin{lemma*}[\ref{lemmaintro}]
    Let \(\mathcal{F}\) be a codimension one distribution of degree \(d\) on \(\p\) such that \(\Tf\) is nonsplit. Then \(\Tf\) is unstable if and only if \(d \geq 3\) and
    \begin{equation*}
        1 \leq t_{\mathcal{F}} \leq \frac{d - 2 + \varepsilon}{2},
    \end{equation*}
    where \(\varepsilon \in \{0,1\}\) and \(\varepsilon \equiv d (\textnormal{mod }2)\). Moreover, \(\Tf\) will be unstable of order \(\frac{d+ \varepsilon}{2} - t_{\mathcal{F}}\). 
\end{lemma*}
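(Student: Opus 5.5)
Throughout, I use the standard setup for codimension one distributions on \(\p\) of degree \(d\). There is an exact sequence
\[
0 \to \Tf \to \tpb \to \mathcal{I}_Z(d+2) \to 0,
\]
so \(\Tf\) is a rank two reflexive sheaf with \(c_1(\Tf) = 2-d\). I normalize by setting \(E := \Tf(k)\) with \(2k + 2 - d \in \{0,-1\}\), that is, \(k = \frac{d-2+\varepsilon}{2}\), so that \(c_1(E) = -\varepsilon\). Under this normalization, \(\Tf\) is stable exactly when \(h^0(E)=0\). It is unstable exactly when \(h^0(E(-1))\neq 0\) if \(\varepsilon = 0\) (for \(\varepsilon=0\) the semistable case \(h^0(E)\neq 0=h^0(E(-1))\) also occurs), and exactly when \(h^0(E)\neq 0\) if \(\varepsilon=1\).

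In Sauer's convention, the order of nonstability is the largest \(r\) with \(h^0(E(-r))\neq 0\), adjusted by parity. With \(t_{\mathcal{F}}\) the least \(t\) such that \(h^0(\Tf(t))\neq 0\), this order is \(r = k - t_{\mathcal{F}} + \varepsilon = \frac{d+\varepsilon}{2} - t_{\mathcal{F}} - (1-\varepsilon) + \dots\). I will fix the exact convention by matching against Definition \ref{menorseçao} and Sauer's definition, which should give \(\frac{d+\varepsilon}{2} - t_{\mathcal{F}}\). The point is that "unstable" translates into \(t_{\mathcal{F}} \le k = \frac{d-2+\varepsilon}{2}\). The remaining task is then the lower bound and the constraint \(d\ge 3\).

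For the lower bound \(t_{\mathcal{F}} \ge 1\), I argue via the inclusion \(\Tf \subset \tpb\). Since \(h^0(\tpb(-1)) = 0\), we get \(h^0(\Tf(t)) = 0\) for all \(t \le -1\), so \(t_{\mathcal{F}} \ge 0\). The case \(t_{\mathcal{F}} = 0\) is the delicate one.

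Suppose \(t_{\mathcal{F}} = 0\). Then \(\Tf\) contains a global vector field \(v \in H^0(\tpb)\), which induces a section \(\mathcal{O} \to \Tf\) whose cokernel has rank one. I would show that this forces \(\Tf\) to split, contradicting the hypothesis. Saturate the section to get \(0 \to \mathcal{O}(a) \to \Tf \to \mathcal{I}_W(2-d-a) \to 0\) with \(a \ge 0\). The composite \(\mathcal{O}(a) \to \tpb\) must be a degree \(-a\) foliation by curves, so \(a\in\{0,1\}\), with \(a=1\) giving the radial field. One then checks, using the known description of degree zero foliations by curves and the condition that \(\mathcal{F}\) is tangent to \(v\), that \(W\) is empty. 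This is where the earlier results of the paper and of \cite{CCJ} on distributions with tangent sheaf containing \(\mathcal{O}\) should be invoked.

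An alternative route to \(W=\emptyset\) is Serre's correspondence. By \(c_2\) computations, the singular curve of \(\Tf\) would have negative degree when \(t_{\mathcal{F}}=0\). This is the step I expect to be the main obstacle: ruling out \(t_{\mathcal{F}}=0\) for nonsplit \(\Tf\). If it fails directly, I would use \(c_3(\Tf) = h^0(\ext^1(\Tf,\mathcal{O}))\ge 0\) together with the Chern class formulas for \(\Tf\) from \cite{CCJ} to obtain a contradiction.

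With \(1 \le t_{\mathcal{F}} \le \frac{d-2+\varepsilon}{2}\) in hand, the inequality forces \(d - 2 + \varepsilon \ge 2\), hence \(d \ge 3\). This is consistent with the remark that \(d \le 2\) gives split or semistable tangent sheaves.

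For the converse, assume \(d\ge3\) and the inequality holds. Then \(h^0(\Tf(t_{\mathcal{F}}))\neq 0\) with \(t_{\mathcal{F}}\) strictly below the stability threshold, so \(\Tf\) is unstable. The order follows from the normalization computation in the first paragraph.
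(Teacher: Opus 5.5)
Your outline has the same shape as the paper's argument: normalize, read off the order as $\frac{d+\varepsilon}{2}-t_{\mathcal{F}}$, get $t_{\mathcal{F}}\ge 1$ from nonsplitness, and deduce $d\ge 3$. However, the one non-formal step, the lower bound $t_{\mathcal{F}}\ge 1$, fails as written.

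First, your convention ``$t_{\mathcal{F}}$ is the least $t$ with $h^0(\Tf(t))\neq 0$'' is off by one from the paper's $t_{\mathcal{F}}=\min\{k : H^0(\Tf(k-1))\neq 0\}$. The vanishing you use is also false: the Euler sequence $0\to\op(-1)\to\op^{\oplus 4}\to\tpb(-1)\to 0$ gives $h^0(\tpb(-1))=4$. Only $h^0(\tpb(-2))=0$ holds, and that yields just $t_{\mathcal{F}}\ge 0$ in the paper's convention.

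Second, the statement you then set out to prove is false: a global vector field $v\in H^0(\tpb)$ lying in $\Tf$ does not force $\Tf$ to split. In Example \ref{nullcorrelation}, $\Tf=N(1)$ is nonsplit with $h^0(N(1))=5$. Likewise, every tangent sheaf in the first family of Theorem \ref{oteorema} is nonsplit with $h^0(\Tf)\neq 0$. In your saturation, the branch $a=0$ is a degree-one subfoliation (the degree is $1-a$, not $-a$), and there $W$ can be a line, a pair of skew lines, or a double line; it is not empty.

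Your numerical fallbacks do not help either. If $\op(1)\subset\Tf$, the zero curve has degree $c_2(\Tf(-1))=d^2+d+1-\deg(C)$, which Theorem \ref{invariantes} does not force to be negative. The condition $c_3\ge 0$ yields no contradiction.

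What is actually needed is only your branch $a=1$, i.e.\ excluding $H^0(\Tf(-1))\neq 0$. Such a section $\op(1)\hookrightarrow\Tf$ is a degree-zero subfoliation by curves $\mathcal{G}$ (the lines through a point $p$), so $\textnormal{Sing}(\mathcal{G})=\{p\}$. The zero scheme $Y$ of this minimal section is either empty or a pure one-dimensional curve, and $Y\subset\textnormal{Sing}(\mathcal{G})$ by (\ref{Y em sing}). Hence $Y=\emptyset$ and $\Tf$ splits (Proposition \ref{novaprop} and Corollary \ref{tf=-1}, i.e.\ \cite[Lemma 4.3]{CCJ}). This is exactly how the paper gets $t_{\mathcal{F}}\ge 1$ for nonsplit $\Tf$.

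The order should also be computed rather than ``matched''. The normalizing twist is $\frac{d-2-\varepsilon}{2}$; your $k=\frac{d-2+\varepsilon}{2}$ gives $c_1=+\varepsilon$. With the correct twist, $(\Tf)_{\eta}^{\vee}=(\Tf)_{\eta}(\varepsilon)=\Tf(\frac{d-2+\varepsilon}{2})$. Maximality of $r$ with $H^0(\Tf(\frac{d-2+\varepsilon}{2}-r))\neq 0$ then means $\frac{d-2+\varepsilon}{2}-r=t_{\mathcal{F}}-1$, i.e.\ $r=\frac{d+\varepsilon}{2}-t_{\mathcal{F}}$. So instability ($r\ge 1$) is equivalent to $t_{\mathcal{F}}\le\frac{d-2+\varepsilon}{2}$, and together with $t_{\mathcal{F}}\ge 1$ this gives $d\ge 3$.
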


Using the classification of foliations by curves of low degree, we obtain a complete classification of the tangent sheaf of every codimension one distribution with \(t_{\mathcal{F}} = 1\) in Proposition \ref{Tfcomseção}. In other words, together with Corollary \ref{tf=-1} we classify the tangent sheaf of all codimension one distributions such that \(H^0(\Tf) \neq 0\). When one allows degree \(d \geq 3\), such codimension one distributions have tangent sheaves that are split or unstable with the maximal possible order.

\begin{teo}[\ref{oteorema}]
    Let \(\mathcal{F}\) be a codimension one distribution of degree \(d\) on \(\p\) such that \(\Tf\) is nonsplit and unstable with maximal possible order. Then \(d \geq 3\) and
    \begin{enumerate}
        \item $\Tf$ has Chern classes $(2-d,1,d)$, generated by the extension of a line, \(\textnormal{Sing}_1(\mathcal{F})\) is a curve of degree \(d^2+1\) with genus \(p_a = d(d-1)^2\) and \(\textnormal{Sing}_0(\mathcal{F})\) consists of \(d\) collinear points; or

        \item $\Tf$ has Chern classes $(2-d,2,2d)$, generated by the extension of a pair of skew lines (or a double line of genus -1), \(\textnormal{Sing}_1(\mathcal{F})\) is a curve of degree \(d^2\) (complete intersection) and \(\textnormal{Sing}_0(\mathcal{F})\) consists of two pairs of \(d\) collinear points (or \(d\) collinear points on the double line).
    \end{enumerate}
\end{teo}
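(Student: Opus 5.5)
By the Lemma in the Introduction (Lemma \ref{lemmaintro}), the order of nonstability of $\Tf$ is $\frac{d+\varepsilon}{2}-t_{\mathcal{F}}$. Since $t_{\mathcal{F}}\ge 1$, this order is maximal exactly when $t_{\mathcal{F}}=1$, and the Lemma already gives $d\geq 3$. So I would reduce the statement to classifying the tangent sheaves of distributions with $t_{\mathcal{F}}=1$, i.e. Proposition \ref{Tfcomseção}. The plan is to use Serre's correspondence on the minimal section, determine the vanishing curve from the classification of degree one foliations by curves, and then read off the singular scheme from the Chern classes.

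\emph{Step 1: Serre's correspondence.} The minimal section gives $\mathcal{O}_{\p}\to \Tf$. Composing with $\Tf\hookrightarrow \tpb$ yields a foliation by curves of degree $1$ (a linear vector field) which is tangent to $\mathcal{F}$. By minimality of $t_{\mathcal{F}}$, the section vanishes in codimension two, so we get
\begin{equation*}
0\to \mathcal{O}_{\p}\to \Tf \to \mathcal{I}_C(2-d)\to 0,
\end{equation*}
with $C$ a curve, Cohen--Macaulay away from finitely many points. The curve $C$ is nonempty, since otherwise $\Tf$ splits. The standard formulas for rank two reflexive sheaves then give
\begin{equation*}
c_2(\Tf)=\deg C, \qquad c_3(\Tf)=2p_a(C)-2+\deg C\,(4-c_1)=2p_a(C)-2+(d+2)\deg C .
\end{equation*}

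\emph{Step 2: the curve $C$.} The curve $C$ lies in the one-dimensional part of the zero locus of the linear vector field. Its zero locus is a linear subspace of eigenvectors, or a union of such subspaces. I would therefore use the classification of degree one foliations by curves on $\p$ to restrict $C$ to three cases: a line, two skew lines, or a double line of genus $-1$.

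The candidates to exclude are:
\begin{enumerate}
\item a plane conic or a plane double line, since the zero locus of a linear field contains no non-linear planar curves;
\item configurations lying in a plane of zeros, which would force the section to vanish on a divisor and contradict minimality;
\item two concurrent lines, which would produce a planar zero locus and hence a vanishing in codimension one.
\end{enumerate}

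This exclusion is the step I expect to be the main obstacle. It needs a careful case analysis of the Jordan forms of the linear field, so that only the line and the skew-line or double-line ($p_a=-1$) cases survive.

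Substituting into the formulas of Step 1 gives the Chern classes:
\begin{enumerate}
\item for a line, $\deg C=1$ and $p_a=0$, so $(c_1,c_2,c_3)=(2-d,1,d)$;
\item for two skew lines or a double line of genus $-1$, $\deg C=2$ and $p_a=-1$, so $(c_1,c_2,c_3)=(2-d,2,2d)$.
\end{enumerate}

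\emph{Step 3: the singular scheme.} I would apply the formulas of \cite{CCJ} relating $c_i(\Tf)$ to the singular scheme, namely $\deg \textnormal{Sing}_1(\mathcal{F})=d^2+2-c_2(\Tf)$, the corresponding genus formula, and $\textnormal{length}\,\textnormal{Sing}_0(\mathcal{F})=c_3(\Tf)$.

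\begin{enumerate}
\item In the first case these give degree $d^2+1$, genus $d(d-1)^2$, and $d$ points.
\item In the second case they give degree $d^2$ and $2d$ points. For the complete intersection claim I would show that $\Tf(d-1)$, or an appropriate twist, is globally generated enough to realise $\textnormal{Sing}_1(\mathcal{F})$ as the zero locus of two sections of $\mathcal{O}(d)$.
\end{enumerate}

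Finally, I would show that $\textnormal{Sing}_0(\mathcal{F})$ is supported on $C$. The points where $\Tf$ fails to be locally free are precisely the points where $\mathcal{I}_C(2-d)$ fails to extend well, and these lie on $C$. Since $C$ consists of lines, the $d$ points, respectively the $d$ points on each line, are collinear, which gives the stated configurations.
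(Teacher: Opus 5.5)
Your reduction to $t_{\mathcal{F}}=1$ and your Chern class computation are sound. There your route is a bit more direct than the paper's. Instead of comparing $\deg K\le\deg\textnormal{Sing}_1(\mathcal{F})<d^2+2$, you place the zero curve $Y$ of the section inside $\textnormal{Sing}_1(\mathcal{G})$ and read off $p_a(Y)$; the inclusion holds because the ideal of $\textnormal{Sing}(\mathcal{G})$ is the image of $\Omega^1_{\p}\to\Tf^{\vee}\to\op$, which lies in $\mathcal{I}_Y$. No Jordan-form analysis is needed for this. By the classification of degree one foliations by curves that the paper quotes, $\textnormal{Sing}_1(\mathcal{G})$ is empty, a line, or two skew lines/a double line of genus $-1$, and $Y$ is a nonempty pure subcurve of it.

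The genuine gap is the complete-intersection claim for $\textnormal{Sing}_1(\mathcal{F})$ in the second family. Degree $d^2$ and genus $d^3-2d^2+1$ do not imply it by themselves, and the tool you propose cannot supply it. Since $H^0(\mathcal{I}_Y(1))=0$ for two skew lines or the genus $-1$ double line, $\Tf(d-1)$ is not globally generated. More fundamentally, global generation of a twist of $\Tf$ is a property of the sheaf alone; it is used (Corollary \ref{regular}) only to produce \emph{some} distribution with tangent sheaf $\Tf$. But $\textnormal{Sing}(\mathcal{F})$ depends on the given inclusion $\Tf\hookrightarrow\tpb$.

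The missing idea is to use $\mathcal{G}$ a second time. Since $\deg Y=2$ and $Y\subseteq\textnormal{Sing}(\mathcal{G})$, $\mathcal{G}$ falls in the third case of the classification, so $\Ng^{\vee}\cong\op(-2)^{\oplus 2}$. The $1$-form $\omega$ of $\mathcal{F}$ kills the vector field of $\mathcal{G}$, because the section factors through $\Tf$. Hence $\omega\in H^0(\Ng^{\vee}(d+2))\cong H^0(\op(d))^{\oplus 2}$, i.e.\ $\omega=f\alpha+g\beta$ with $\alpha,\beta$ generating $\Ng^{\vee}(2)$ and $\deg f=\deg g=d$. So the $(d,d)$ complete intersection $K=\{f=g=0\}$ lies in $\textnormal{Sing}(\mathcal{F})$; it is a curve because $\textnormal{Sing}(\mathcal{F})$ has codimension at least $2$. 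As $K$ is Cohen--Macaulay, it lies in $C=\textnormal{Sing}_1(\mathcal{F})$, and $\deg C=d^2+2-c_2(\Tf)=d^2=\deg K$ forces $C=K$. This is exactly what the paper does by dualizing $0\to\mathcal{I}_Y(2-d)\to\Ng\to\mathcal{I}_Z(d+2)\to0$ into $0\to\op(-d-2)\to\Ng^{\vee}\to\mathcal{I}_K(d-2)\to0$, with $\mathcal{O}_K(d-2)\hookrightarrow\omega_C(2-d)$.

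A smaller point: ``$d$ points on each line'' also needs an argument. The extension class $\xi\in H^0(\omega_Y(d+2))\cong H^0(\mathcal{O}_{L_1}(d))\oplus H^0(\mathcal{O}_{L_2}(d))$ vanishes exactly where $\Tf$ is not locally free, and that locus is $\textnormal{Sing}_0(\mathcal{F})$ by \cite{CCJ}.
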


We show that every reflexive sheaf in item 1, generated by an extension of a line, is the tangent sheaf of a codimension one distribution. For those in item 2, we cannot guarantee such existence, but we give an explicit example of a homogeneous 1-form vanishing exactly at the prescribed singular scheme.

\paragraph{}
This paper is organized as follows. In Section \ref{section 2}, we review the basic definitions and results concerning distributions and the stability of  reflexive sheaves on \(\pn\). In Section \ref{section 3}, we focus on codimension one distributions on \(\p\), emphasizing how their subfoliations impose numerical conditions on the tangent sheaf. We also provide a classification of codimension one distributions that admit subfoliations by curves of degree 0 and 1. In Section \ref{section 4}, we relate these numerical conditions to the instability of the tangent sheaf.

\section{Preliminaries}\label{section 2}

\subsection{Distributions and Foliations on Smooth Varieties}
\paragraph{} A complete treatment of this subject can be found in \cite{CCJ}. Here, we will only state the definitions required to understand the proof of our main result. For the reader not familiar with basic concepts of algebraic geometry we recommend \cite{hartshorne2013algebraic}. In the following, $X$ is a smooth complex projective variety of dimension $n$.
\begin{definition}
    A (singular) distribution $\mathcal{F}$ of codimension $r$ on $X$ is given by an exact sequence
\begin{equation}\label{distemX}
    0 \longrightarrow \Tf \xlongrightarrow{\phi} TX \longrightarrow \Nf \longrightarrow 0,
\end{equation}
where $\Tf$ is a reflexive sheaf of rank $n-r$, and $\Nf$ is a torsion-free sheaf of rank $r$, called the tangent and normal sheaves of $\mathcal{F}$, respectively.
\end{definition}

The singular scheme of $\mathcal{F}$ will be the points where the stalk of the normal sheaf is not a free module over the local ring,   
\begin{equation*}
    \textnormal{Sing}(\mathcal{F}) = \textnormal{Sing}(\Nf) := \bigcup_{i=1}^{n-1} \textnormal{Supp}(\ext^i(\Nf,\ox)).
\end{equation*}
In other words, \(\textnormal{Sing}(\mathcal{F})\) are the points that \(\Nf\) fails to be a locally free. Since $\Nf$ is at least torsion-free, by \cite[Lemma 1.1.8.]{okonek1980vector} we have that $\textnormal{Sing}(\mathcal{F})$ is a subscheme of codimension at least 2 in \(X\).
\begin{remark}
    We say that a distribution \(\mathcal{F}\) is regular if it has no singularities, that is \(\textnormal{Sing}(\mathcal{F}) = \emptyset\). Notice that this is equivalent to the normal sheaf \(\Nf\) being locally free. 
\end{remark}

\paragraph{}
The morphism \(\phi: \Tf \hookrightarrow \tpb\) defined in (\ref{distemX}), by taking the maximal exterior power, generates a global section 
\begin{equation}\label{quasiespdist}
    \omega_{\mathcal{F}} \in H^0\big(X, \bigwedge^{n-r} TX \otimes \det(\Tf)^{\vee} \big) = H^0\big(X,\Omega^r_X \otimes \det(TX) \otimes \det(\Tf)^{\vee}  \big).
\end{equation}
We say that two distributions \(\mathcal{F}\) and \(\mathcal{F}'\) are isomorphic, if we have an isomorphism of sheaves \(\Tf \xrightarrow{\sim} T_{\mathcal{F}'}\) such that the diagramm comutes
\begin{equation*}
    \begin{tikzcd}
        \Tf \arrow[r, hook, "\phi"] \arrow[d, "\rotatebox{90}{$\sim$}"] & TX \arrow[d, equal] \\
        T_{\mathcal{F}'} \arrow[r, hook, "\phi'"] & TX
    \end{tikzcd}
\end{equation*}
With this definition, isomorphic distributions defines multiple sections in (\ref{quasiespdist}), therefore every isomorphism class of a codimension \(r\) distribution defines an element in
\begin{equation}\label{espaçodist}
    \mathbb{P} \big( H^0\big(X, \bigwedge^{n-r} TX \otimes \det(\Tf)^{\vee} \big) \big) = \mathbb{P} \big(H^0\big(X,\Omega^r_X \otimes \det(TX) \otimes \det(\Tf)^{\vee} \big) \big).
\end{equation}

\paragraph{}
A distribution is called involutive if satisfies \([\phi(\Tf),\phi(\Tf)] \subset \phi(\Tf)\) and in this case, by Frobenius Theorem the distribution is integrable, that is tangent to a foliation. When the distribution is integrable we will simply call it a foliation.

\begin{remark}
    A 1 dimensional distribution must be a foliation, since locally is generated by one vector field and locally it's Lie bracket is 0. We call a 1 dimensional distribution by foliation by curves.
\end{remark}

A major problem on the theory of foliations would be the classification of the irreducible components of the space of foliations, that is the Zariski open subset of (\ref{espaçodist}) with integrable sections. On the projective space some authors have made a classification for codimension one foliations such as \cite{jouanolou2006equations} for degree 0 and 1, \cite{cerveau1996irreducible} for degree 2 and \cite{da2022codimension} for a partial classification of degree 3.

\paragraph{}
In the other direction, a global section \(\omega \in H^0(X,\Omega_X^r \otimes \mathcal{L})\) with \(\mathcal{L} \in \textnormal{Pic}(X)\) defines a codimension \(r\) distribution on \(X\) if:
\begin{itemize}
    \item \(\textnormal{Codim(Sing (}\omega)) \geq 2\);

    \item For \(U \subset X\) an open set and any \(p \in U \setminus \textnormal{Sing}(\omega)\), there exists a neighborhood \(p \in V \subset U\) and 1-forms \(\eta_1, \dots, \eta_r\) on \(V\) such that
    \begin{equation*}
        \omega \mid_{V} = \eta_1 \wedge \dots \wedge \eta_r.
    \end{equation*}

    \item \(\omega\) defines a foliation if and only if 
    \begin{equation*}
        d\eta_j \wedge \eta_1 \dots \wedge \eta_r =0 , \ \ \textnormal{for all} \ \ j = 1,\dots,r.
    \end{equation*}
\end{itemize}

\paragraph{}
As we will see in the projective space, the concept of a subdistribution provides a useful framework for translating the classification of distributions of different codimensions.

\begin{definition}
    Let $\mathcal{F}$ be a codimension $r$ distribution on $X$. A subdistribution $\mathcal{G}$ of $\mathcal{F}$, is a codimension $r'\ ( \ > r)$ distribution whose tangent sheaf $\Tg$ is a subsheaf of $\Tf$.
\end{definition}

The quotient $N_{\mathcal{G}/\mathcal{F}} := \Tf/\Tg$ is called the relative normal sheaf and using the comutative diagramm
\begin{equation*}
    \begin{tikzcd}
& 0 \arrow{d} & 0 \arrow{d} \\ & \Tg \arrow{d} \arrow[equal]{r} & \Tg \arrow{d}  &   \\ 0 \arrow{r} & \Tf \arrow{d} \arrow{r} & TX \arrow{d} \arrow{r} & \Nf \arrow[equal]{d} \arrow{r} & 0 \\
 0 \arrow{r} & N_{\mathcal{G}/\mathcal{F}} \arrow{d} \arrow{r} & N_{\mathcal{G}} \arrow{r} \arrow{d} & \Nf  \arrow{r} & 0 \\ & 0 & 0 & 
\end{tikzcd}
\end{equation*}
The relative normal sheaf satisfy the exact sequence
\begin{equation*}
    0 \longrightarrow N_{\mathcal{G}/\mathcal{F}} \longrightarrow \Ng \longrightarrow \Nf \longrightarrow 0,
\end{equation*}
which shows that $N_{\mathcal{G}/\mathcal{F}}$ is a torsion-free sheaf of rank $r'-r$. If \(\mathcal{G}\) is a foliation, then is call a subfoliation.

\begin{remark}
    When \(\mathcal{F}\) is not a foliation by curves, a nonzero global section in \(H^0(X,\Tf \otimes \mathcal{L})\) with \(\mathcal{L} \in \textnormal{Pic}(X)\), defines a subfoliation by curves with tangent sheaf \(\mathcal{L}^{\vee}\).
\end{remark}

\paragraph{}
To finish this part, we give a brief introduction to the moduli space of distributions on \(X\). Fix \(P \in \mathbb{Q}[t]\) a rational polynomial and consider the functor
\begin{equation*}
    Dist^{P}_{X}(S) := 
\left\{
\begin{array}{l}
(\mathbf{F}, \phi) \mid \text{ such that } \mathbf{F} \xrightarrow{\phi} TX_{S} \text{ is a flat family of} \\
\ \ \ \ \ \ \ \ \ \ \ \ \ \  \text{distributions, with } \chi(\mathbf{F}{|_s}(t)) = P(t),  \forall \ s \in S
\end{array}
\right\} \big/ \sim.
\end{equation*}

The authors showed in \cite[Proposition 2.4.]{CCJ} that this functor is representable by a quasi-projective scheme \(D^P(X)\). Hence, we have a functorial isomorphism
\begin{equation*}
    Dist_X^P(-) \xlongrightarrow{\sim} \Hom(-, D^P(X)).
\end{equation*}
The scheme \(D^P(X)\) is defined as the open subset of \(\textnormal{Quot}^{P_{TX}-P}(TX)\) whose points parametrize torsion-free quotients of \(TX\). This construction does not impose any stability condition on the tangent sheaf associated with the family of distributions.

\subsection{Stability and Nonstability of Reflexive Sheaves}
\paragraph{}
Stability of sheaves is a fundamental concept in algebraic geometry, and stable sheaves are the building blocks of sheaf theory. However, when additional structure is imposed on a sheaf, such as being the tangent sheaf of a distribution, stability cannot be guaranteed. Slope stability requires fixing a polarization on the variety, however since we will be working with the projective space, the polarization is canonical. 
\begin{definition}\label{Stability}
    A torsion-free sheaf \(F\) on \(\pn\) is \(\mu\)-stable (resp. \(\mu\)-semistable) if, for every proper coherent subsheaf \(F' \subset F\) with \(0 < \textnormal{rank}(F') < \textnormal{rank}(F)\), we have
    \begin{equation*}
        \mu(F') < \mu(F),
    \end{equation*}
    (resp. \(\leq\)), where \(\mu = c_1/\textnormal{rank}\) is the slope of the sheaf.
\end{definition}

\begin{remark}
    In this paper, we deal with rank 2 reflexive sheaves on \(\p\). In this context, slope stability is equivalent to Gieseker stability, and since slope stability is a numerical condition, we will focus on this concept. Since we don't need to specify the type of stability, we will simply call a sheaf stable. For a complete treatment of Gieseker stability, we recommend \cite{huybrechts2010geometry}.
\end{remark}

There is a simple criterion for verifying the stability of reflexive sheaves using the normalization of a reflexive sheaf. Let $F$ be a rank 2 reflexive sheaf on $\pn$, the normalization of $F$ is defined as
\begin{equation*}
    F_{\eta} = 
    \begin{cases}
        F(-c_1(F)/2), \ \textnormal{if} \ \ c_1 \ \ \textnormal{is even,} \\
        F(-(c_1+1)/2), \ \textnormal{if} \ \ c_1 \ \ \textnormal{is odd,}
    \end{cases}
\end{equation*}
where \(F(d) = F \otimes \opn(d)\). We say that a sheaf \(F\) is normalized if \(F = F_{\eta}\). Notice that the first Chern class of the normalization is \(0\) or \(-1\). Since \(F\) is stable if and only if \(F(d)\) is stable for every \(d \in \Z\), then \(F\) is stable if and only if \(F_{\eta}\) is stable.

\begin{lemma}[\cite{RH2}]\label{stabfeixe}
    Let \(F\) be a rank 2 reflexive sheaf on \(\pn\). Then \(F\) is stable if and only if \(H^0(F_{\eta}) = 0\). If \(c_1\) is even, then \(F\) is \(\mu\)-semistable if and only if \(H^0(F_{\eta}(-1)) = 0\).
\end{lemma}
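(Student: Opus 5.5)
This is the standard criterion of Hartshorne, and I would prove it directly from the definition of slope stability, reducing to rank one subsheaves and then to sections of twists. Since stability is invariant under twisting, I may replace \(F\) by \(F_\eta\). Then \(c_1 := c_1(F_\eta)\in\{0,-1\}\) and \(\mu(F_\eta)=c_1/2\in\{0,-\tfrac12\}\).

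\textbf{Step 1: reduction to saturated rank one subsheaves.} For rank 2, the proper subsheaves \(F'\) allowed in Definition \ref{Stability} have rank 1. Replacing \(F'\) by its saturation in \(F\) can only increase \(c_1(F')\), so it suffices to test saturated rank one subsheaves. A saturated subsheaf of a reflexive sheaf is reflexive. A reflexive rank one sheaf on the smooth variety \(\pn\) is a line bundle, so \(F'=\opn(k)\) for some \(k\in\Z\).

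\textbf{Step 2: translation into sections.} An inclusion \(\opn(k)\hookrightarrow F\) is the same as a nonzero section of \(F(-k)\). Conversely, a nonzero section \(s\in H^0(F(-k))\) gives an injection \(\opn\to F(-k)\), since \(F\) is torsion free. Hence \(F\) has a rank one subsheaf with \(c_1\geq k\) if and only if \(H^0(F(-k'))\neq0\) for some \(k'\geq k\). Because \(H^0(F(-k'))\subset H^0(F(-k))\) for \(k'\geq k\), via multiplication by a form of degree \(k'-k\), this happens if and only if \(H^0(F(-k))\neq 0\).

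\textbf{Step 3: the numerical comparison.}
\begin{itemize}
\item \emph{Stability.} \(F_\eta\) is stable if and only if every \(\opn(k)\subset F_\eta\) satisfies \(k<c_1/2\). Since \(c_1/2\in\{0,-\tfrac12\}\), in both cases this means \(k\leq -1\), that is, no subsheaf \(\opn(k)\) with \(k\geq 0\). By Step 2 this is equivalent to \(H^0(F_\eta)=0\).
\item \emph{Semistability.} When \(c_1\) is even, \(c_1(F_\eta)=0\), and semistability means \(k\leq 0\) for every subsheaf \(\opn(k)\). This says there is no subsheaf \(\opn(1)\), which is equivalent to \(H^0(F_\eta(-1))=0\).
\end{itemize}

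The only nonformal step is Step 1: that saturated rank one subsheaves of a reflexive sheaf are line bundles. I would argue that the quotient \(F/F'\) is torsion free, so \(F'\) is reflexive (Okonek–Schneider–Spindler / Hartshorne, Prop. 1.1 of "Stable reflexive sheaves"). Reflexive rank one sheaves on a smooth variety are invertible, and \(\mathrm{Pic}(\pn)=\Z\).
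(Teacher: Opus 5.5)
Your argument is correct, and it is essentially the standard proof of this criterion: saturate a rank one subsheaf (which can only raise $c_1$), use reflexivity to identify saturated rank one subsheaves with line bundles $\opn(k)$, and convert the slope inequality into the vanishing of $H^0$ of the appropriate twist of $F_\eta$. The paper gives no proof of its own and simply cites Hartshorne, whose argument is the one you reconstructed, so there is nothing further to compare.
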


To say that a reflexive sheaf is unstable is the same as saying that it fails to satisfy Definition \ref{Stability}, in the sense that it's not even \(\mu\)-semistable. Since this is equivalent to failing the above lemma, we will use definition of a new invariant first introduced in \cite{sauer1984nonstable}.

\begin{definition}\label{Nonstability}
    Let $F$ be a rank 2 reflexive sheaf on $\pn$. Then $F$ is unstable of order $r$ if there is a positive integer $r$ maximal with the property
    \begin{equation*}
        H^0((F_{\eta})^{\vee}(-r)) \neq 0.
    \end{equation*}
\end{definition}

We call \(r\) the order of nonstability of the sheaf \(F\). As an immediate consequence, if \(r = 0\), then \(F\) is stable for \(c_1\) odd and strictly \(\mu\)-semistable for \(c_1\) even. So when we assume that a reflexive sheaf is unstable, we assume that \(r > 0\).

\paragraph{}
The maximal property of \(r\) implies that the section \(\sigma \in H^0((F_{\eta})^{\vee}(-r))\) vanishes in a pure 1-dimensional Cohen-Macaulay subscheme of \(\p\). In fact, let \(n := \textnormal{min}\{k \in \Z \mid H^0(F(k)) \neq 0 \}\) and take \(\sigma \in H^0(F(n))\) a nonzero section. Suppose that \(\textnormal{coker}(\sigma)\) has torsion with \(T(\sigma)\) it's maximal torsion subsheaf. Then we would obtain the exact sequence
\begin{equation*}
    0 \longrightarrow \op(k) \longrightarrow F(n) \longrightarrow T(\sigma)/\textnormal{coker}(\sigma) \longrightarrow 0.
\end{equation*}
But this would be an absurd, because \(\op \hookrightarrow \op(k)\) implies that \(k > 0\) and then \(H^0(F(n-k)) = 0\).
%\begin{equation*}
%    \begin{tikzcd}
%&  &  & 0 \arrow{d} \\ & 0 \arrow{d}  &   & T(\sigma) \arrow{d}   \\ 0 \arrow{r} & \op \arrow{d} \arrow{r} & F(n) \arrow[equal]{d} \arrow{r} & \textnormal{coker}(\sigma) \arrow{d} \arrow{r} & 0 \\
%0 \arrow{r} & \op(k) \arrow{d} \arrow{r} & F(n) \arrow{r}  & T(\sigma)/\textnormal{coker}(\sigma) \arrow{d}  \arrow{r} & 0 \\ & T(\sigma) \arrow{d} &  &  0 \\ & 0
%\end{tikzcd}
%\end{equation*}
%This would implie that \(k > 0\) and we would obtain an absurd, so \(\textnormal{coker}(\sigma)\) is a torsion-free rank 1 sheaf. Returning to the section \(\sigma \in H^0((F_{\eta})^{\vee}(-r))\), we would obtain an exact sequence
Returning to the original section \(\sigma \in H^0((F_{\eta})^{\vee}(-r))\), we have the exact sequence
\begin{equation*}
    0 \longrightarrow \op \xlongrightarrow{\sigma} (F_{\eta})^{\vee}(-r) \longrightarrow \mathcal{I}_Y(-r-c_1(F_{\eta})) \longrightarrow 0.
\end{equation*}
where \(Y\) is a pure 1-dimensional Cohen-Macaulay subscheme of \(\p\) defined by the zeros of \(\sigma\). We call \(Y\) the curve associated to the sheaf \(F\). Notice that, assuming \(r > 0\) the space \(H^0((F_{\eta})^{\vee}(-r))\) is 1-dimensional.

\section{Distributions and Foliations on \(\p\)}\label{section 3}
\paragraph{}
Here we will extend the definitions of the last section to the case \(X = \p\) and discuss some of its principal properties and invariants. Our main goal is to study codimension one distributions using the minimal possible degree of the subfoliation by curves that they admit.

\subsection{Foliation by Curves on \(\p\)}
\paragraph{}

A foliation by curves \(\mathcal{G}\) of degree \(d'\) in \(\p\) is given by
\begin{equation*}
    0 \longrightarrow \op(1-d') \longrightarrow \tpb \longrightarrow \Ng \longrightarrow 0,
\end{equation*}
where \(\Ng\) is a rank 2 torsion-free sheaf. Taking the dual sequence, we obtain
\begin{equation}\label{foliporcurvas}
    0 \longrightarrow \Ng^{\vee} \longrightarrow \Omega^1_{\mathbb{P}^3} \longrightarrow \mathcal{I}_W(d'-1) \longrightarrow 0
\end{equation}
where \(\Ng^{\vee}\) is a rank 2 reflexive sheaf and \(W = \textnormal{Sing}(\mathcal{G})\). The sheaf \(\Ng^{\vee}\) is called the conormal sheaf of \(\mathcal{G}\). Now consider $\mathcal{U}'$ the maximal zero-dimensional subsheaf of $\mathcal{O}_W$, so that the quotient sheaf is the structure sheaf of a subscheme $C' \subset W \subset \p$ of pure dimension 1. We call 
\begin{center}
    \(C' = \textnormal{Sing}_1(\mathcal{G})\) the 1-dimensional component of \(\textnormal{Sing}(\mathcal{G})\).
\end{center}
\begin{center}
    \(\textnormal{Supp}(\mathcal{U}') = \textnormal{Sing}_0(\mathcal{G})\) the 0-dimensional component of \(\textnormal{Sing}(\mathcal{G})\).
\end{center}

\begin{remark}
    From another point of view, the foliation by curves \(\mathcal{G}\) defines a section \(v \in H^0(\tpb(d'-1))\). Consider the twisted Euler sequence
    \begin{equation*}
        0 \longrightarrow \op(d' - 1) \longrightarrow \op(d')^{\oplus \ 4} \longrightarrow \tpb(d'-1) \longrightarrow 0,
    \end{equation*}
    in homogeneous coordinates \(x_i\), we may represent the global section as 
    \begin{equation*}
        v = \sum F_i \frac{\partial}{\partial x_i} \ \ \textnormal{with} \ \ F_i \in H^0(\op(d')).
    \end{equation*}
    Two representations \(\{F_i\}\) and \(\{G_i\}\) coincide if there exists \(f \in H^0(\op(d'-1))\) such that
    \begin{equation*}
        F_i - G_i =  x_if, \ \ \textnormal{for every} \ i=0,\dots,3.
    \end{equation*}
    The singular scheme \(\textnormal{Sing}(\mathcal{G})\) is defined by the homogeneous ideal generated by the \(2 \times 2\) minors of the matrix
    \begin{equation*}
        \begin{pmatrix}
            F_0 & F_1 & F_2 & F_3 \\
            x_0 & x_1 & x_2 & x_3
        \end{pmatrix}.
    \end{equation*}
\end{remark}

Using the exact sequence (\ref{foliporcurvas}) we may correlate the Chern classes of the conormal sheaf with the invariants of the singular scheme.
\begin{theorem}[\cite{correa2023classification}]\label{invariantesfolporcurv}
    Let $\mathcal{G}$ be a foliation by curves of degree $d'$ in $\p$ and consider the construction above for $C'$ and $\mathcal{U}'$. Then,
    \begin{align*}
        c_1(\Ng^{\vee}) & = -3-d' \\
        c_2(\Ng^{\vee}) &= d'^2 +2d'+3 - \textnormal{deg}(C') \\
        c_3(\Ng^{\vee}) &= \textnormal{length}(\mathcal{U}) = d'^3 + d'^2 + d' - 3\textnormal{deg}(C')(d'-1) + 2p_a(C) - 1
    \end{align*}
\end{theorem}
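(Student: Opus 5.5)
The plan is to read all three Chern classes off the dual sequence (\ref{foliporcurvas}),
\[
0 \longrightarrow \Ng^{\vee} \longrightarrow \Omega^1_{\mathbb{P}^3} \longrightarrow \mathcal{I}_W(d'-1) \longrightarrow 0,
\]
using the Whitney formula $c(\Ng^{\vee}) = c(\Omega^1_{\mathbb{P}^3})\cdot c(\mathcal{I}_W(d'-1))^{-1}$. We write $h$ for the hyperplane class. From the Euler sequence, $c(\Omega^1_{\mathbb{P}^3}) = (1-h)^4 = 1-4h+6h^2-4h^3$.

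The first Chern class is then immediate. Since $W$ has codimension $\geq 2$, we have $c_1(\mathcal{I}_W(d'-1)) = d'-1$, and hence $c_1(\Ng^{\vee}) = -4-(d'-1) = -3-d'$.

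For the higher classes I compute the Chern character of $\mathcal{O}_W$ from the sequence
\[
0\to \mathcal{U}'\to \mathcal{O}_W\to \mathcal{O}_{C'}\to 0.
\]
The pieces are as follows.
\begin{itemize}
\item For $\mathcal{O}_{C'}$, Grothendieck--Riemann--Roch together with $\chi(\mathcal{O}_{C'}(t)) = \deg(C')\,t + 1 - p_a(C')$ gives $\textnormal{ch}_2 = \deg(C')\,h^2$ and $\textnormal{ch}_3 = (1-p_a(C') - 2\deg(C'))\,h^3$. The term $-2\deg(C')$ comes from the Todd class of $\mathbb{P}^3$, $\textnormal{td}_1 = 2h$.
\item The zero-dimensional sheaf $\mathcal{U}'$ contributes $\textnormal{ch}_3 = \textnormal{length}(\mathcal{U}')\,h^3$.
\end{itemize}
From this I get $\textnormal{ch}(\mathcal{I}_W) = 1 - \textnormal{ch}(\mathcal{O}_W)$. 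Twisting by $e^{(d'-1)h}$ gives $\textnormal{ch}(\mathcal{I}_W(d'-1))$, which I convert to Chern classes by Newton's identities. Concretely, $c_2(\mathcal{I}_W(d'-1)) = \deg(C')$, and $c_3$ is an explicit linear expression in $\deg(C')$, $p_a(C')$, $\textnormal{length}(\mathcal{U}')$ and $d'$. Inverting the total Chern class and multiplying by $(1-h)^4$ then yields $c_2(\Ng^{\vee}) = d'^2+2d'+3-\deg(C')$ directly. It also yields a formula for $c_3(\Ng^{\vee})$ that still involves $\textnormal{length}(\mathcal{U}')$.

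To eliminate $\textnormal{length}(\mathcal{U}')$ and get both equalities for $c_3$, I use that $\Ng^{\vee}$ is a rank $2$ reflexive sheaf. By Hartshorne's result on stable reflexive sheaves, $c_3(\Ng^{\vee}) = h^0(\ext^1(\Ng^{\vee},\omega_{\mathbb{P}^3}))$. Applying $\mathcal{H}om(-,\omega_{\mathbb{P}^3})$ to (\ref{foliporcurvas}), and using that $\Omega^1$ is locally free, gives
\[
\ext^1(\Ng^{\vee},\omega) \cong \ext^2(\mathcal{I}_W,\omega)(1-d') \cong \ext^3(\mathcal{O}_W,\omega)(1-d').
\]
Since $C'$ is Cohen--Macaulay of pure dimension one, $\ext^3(\mathcal{O}_{C'},\omega)=0$. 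Hence $\ext^3(\mathcal{O}_W,\omega)\cong \ext^3(\mathcal{U}',\omega)$, which is the Matlis dual of $\mathcal{U}'$ and has the same length. This proves $c_3(\Ng^{\vee}) = \textnormal{length}(\mathcal{U}')$.

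Substituting this into the Whitney-formula expression gives a linear equation for $\textnormal{length}(\mathcal{U}')$. Solving it should produce $d'^3+d'^2+d'-3\deg(C')(d'-1)+2p_a(C')-1$. As a sanity check, when $C'=\emptyset$ the right-hand side should equal the number of zeros of a general section of $\tpb(d'-1)$, namely $c_3(\tpb(d'-1)) = d'^3+d'^2+d'+1$. The degree-$3$ bookkeeping has the right shape for this: the constant term comes out as $+1$ when $C'=\emptyset$ and as $-1$ once $C'$ contributes $2p_a(C')$ with $\chi(\mathcal{O}_{C'})$ absorbing the sign.

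The main obstacle is the degree-$3$ computation. There are sign conventions in passing from $\textnormal{ch}_3$ to $c_3$, both the $\textnormal{td}_1$ and the $\textnormal{td}_2$ (that is, $h^2$) corrections enter, and the cross terms from the twist by $d'-1$ must be tracked carefully. I would organise it by first computing $c(\mathcal{I}_W)$ and then applying the standard rank-$1$ twisting formula for $c(\mathcal{I}_W(k))$, to keep the algebra manageable.
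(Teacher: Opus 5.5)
Your proposal is correct. The paper gives no proof of this statement; it is quoted from \cite{correa2023classification}. Your route is the standard one: Whitney's formula on (\ref{foliporcurvas}), Riemann--Roch for $\mathcal{O}_W$ split along $0\to\mathcal{U}'\to\mathcal{O}_W\to\mathcal{O}_{C'}\to 0$, and the identification $c_3(\Ng^{\vee})=\textnormal{length}\,\ext^1(\Ng^{\vee},\omega_{\p})=\textnormal{length}\,\ext^3(\mathcal{U}',\omega_{\p})$. It is the same argument \cite{CCJ} uses for Theorem \ref{invariantes}.

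The degree-three bookkeeping you flag as the main obstacle does close up. Write $k=d'-1$, $e=\deg(C')$, $g=p_a(C')$ and $u=\textnormal{length}(\mathcal{U}')$. Then
\[
c(\mathcal{I}_W(k))=1+kh+eh^2+(4e-ke+2g-2-2u)h^3,
\qquad
c_3(\Ng^{\vee})=-k^3-4k^2-6k-2+3ke-2g+2u.
\]
Setting the second expression equal to $u$ gives $u=k^3+4k^2+6k+2-3ke+2g$. This is exactly the stated formula, because $k^3+4k^2+6k+2=d'^3+d'^2+d'-1$. Your sanity check works with the convention $p_a(\emptyset)=1$, and the formula also reproduces $c_3=4,2,0$ for the three degree-one cases listed in the paper.

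Two small remarks:
\begin{itemize}
\item Hartshorne's identity $c_3(F)=\textnormal{length}\,\ext^1(F,\omega_{\p})$ holds for every rank $2$ reflexive sheaf on $\p$, with no stability hypothesis. This matters here, since $\Ng^{\vee}$ can be split or unstable.
\item The $\mathcal{U}$ and $p_a(C)$ in the statement are typos for $\mathcal{U}'$ and $p_a(C')$. You read them correctly.
\end{itemize}
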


In \cite[Theorem 4.]{galeano2022codimension}, the authors classify the conormal sheaf \(\Ng^{\vee}\) of foliation by curves of degree 1. Their result was that 
\begin{enumerate}
    \item \(\Ng^{\vee}\) is stable with \(c_2 = 6\), \(c_3 = 4\) and \(\textnormal{Sing}(\mathcal{G})\) is a 0-dimensional scheme of length 4; or

    \item \(\Ng^{\vee}\) is strictly \(\mu\)-semistable with \(c_2 = 5\), \(c_3 = 2\) and \(\textnormal{Sing}(\mathcal{G})\) is the union of a line with a 0-dimensional scheme of length 2; or

    \item \(\Ng^{\vee} = \op(-2)^{\oplus 2}\) and \(\textnormal{Sing}(\mathcal{G})\) is a pair of skew lines (or a double line of genus -1).
\end{enumerate}

\paragraph{}
Let's investigate the foliation by curves whose 1-dimensional component of the singular scheme is a line, along with some isolated points. Let \(\mathcal{G}\) be a foliation by curves of degree \(d'\) such that \(\textnormal{Sing}_1(\mathcal{G}) = L\) is a line, then
\begin{align*}
    c_1(\Ng^{\vee}) &= -3 -d' \\
    c_2(\Ng^{\vee}) &= d'^2 + 2d' + 2 \\
    c_3(\Ng^{\vee}) &= d'^3 + d'^2 -2d' + 2 
\end{align*}

Such a foliation is defined by an element of \(H^0(\tpb(d'-1) \otimes \mathcal{I}_L)\). Using the exact sequence of the line as a complete intersection we obtain
\begin{equation*}
    0 \longrightarrow \tpb(d'-3) \longrightarrow \tpb(d'-2)^{\oplus 2} \longrightarrow \tpb(d'-1) \otimes \mathcal{I}_L \longrightarrow 0.
\end{equation*}
Since \(\tpb(d'-2)\) is globally generated for \(d' \geq 1\), so is \(\tpb(d'-1) \otimes \mathcal{I}_L\). This means that the space \(H^0(\tpb(d'-1) \otimes \mathcal{I}_L)\) has a genereic section for each \(d' \geq 1\). This guarantee the existence of foliation by curves vanishing on the line \(L\).

\subsection{Codimension One Distributions on $\p$}

\paragraph{}
A codimension one distribution $\mathcal{F}$ of degree $d$, is given by an exact sequence
\begin{equation}\label{eq1}
    0 \longrightarrow \Tf \longrightarrow \tpb \longrightarrow \mathcal{I}_Z(d+2) \longrightarrow 0,
\end{equation}
with $\Tf$ a rank 2 reflexive sheaf and $Z = \textnormal{Sing}(\mathcal{F})$ the singular scheme of $\mathcal{F}$. As the same process of the foliation by curves, we might consider $\mathcal{U}$ the maximal zero-dimensional subsheaf of $\mathcal{O}_Z$, so that the quotient sheaf is the structure sheaf of a subscheme $C \subset Z \subset \p$ of pure dimension 1. We call 
\begin{center}
    \(C = \textnormal{Sing}_1(\mathcal{F})\) the 1-dimensional component of \(\textnormal{Sing}(\mathcal{F})\).
\end{center}
\begin{center}
    \(\textnormal{Supp}(\mathcal{U}) = \textnormal{Sing}_0(\mathcal{F})\) the 0-dimensional component of \(\textnormal{Sing}(\mathcal{F})\).
\end{center}

\begin{remark}
    As we have seen, the codimension one distribution \(\mathcal{F}\) defines a section \(\omega \in H^0(\Omega_{\p}^1(d+2))\). The twisted Euler sequence
    \begin{equation*}
        0 \longrightarrow \Omega_{\p}^1(d+2) \longrightarrow \op(d+1)^{\oplus\ 4} \longrightarrow \op(d+2) \longrightarrow 0,
    \end{equation*}
    shows that in homogeneous coordinates \(x_i\), we may write the section as 
    \begin{equation*}
        \omega = \sum_{i=0}^3 A_i dx_i \  \ \textnormal{with} \ \ A_i \in H^0(\p,\op(d+1)),
    \end{equation*}
    where the homogeneous polynomials \(A_i\) must satisfy the condition
    \begin{equation*}
        \sum_{i=0}^3 A_i x_i = 0.
    \end{equation*}
\end{remark}

\begin{theorem}[\cite{CCJ}]\label{invariantes}
    Let $\mathcal{F}$ be a codimension one distribution of degree $d$ in $\p$ and consider the construction above for $C$ and $\mathcal{U}$. Then,
    \begin{align*}
        c_1(\Tf) & = 2-d \\
        c_2(\Tf) &= d^2 +2 - \textnormal{deg}(C) \\
        c_3(\Tf) &= \textnormal{length}(\mathcal{U}) = d^3 + 2d^2 + 2d - \textnormal{deg}(C)(3d-2) + 2p_a(C) - 2
    \end{align*}
\end{theorem}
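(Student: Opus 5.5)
The plan is to read everything off the defining sequence (\ref{eq1}) by additivity of Chern characters, which turns the computation into bookkeeping. Passing through \(\textnormal{ch}\) rather than total Chern classes makes both additivity and twisting transparent. Write \(h\) for the hyperplane class and identify \(H^6(\p,\Z)\) with \(\Z\) via the point class. Set \(e=\textnormal{deg}(C)\), \(g=p_a(C)\) and \(\ell=\textnormal{length}(\mathcal{U})\).

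\emph{Step 1: the Chern character of \(\mathcal{O}_Z\).} From \(0\to\mathcal{U}\to\mathcal{O}_Z\to\mathcal{O}_C\to 0\) we get \(\textnormal{ch}(\mathcal{O}_Z)=\textnormal{ch}(\mathcal{O}_C)+\ell\,[pt]\), since a zero-dimensional sheaf of length \(\ell\) has \(\textnormal{ch}=\ell[pt]\). For the Cohen--Macaulay curve \(C\) we have \(\textnormal{ch}(\mathcal{O}_C)=e\,h^2+a\,[pt]\). The constant \(a\) is fixed by Hirzebruch--Riemann--Roch against \(\textnormal{td}(\p)=1+2h+\tfrac{11}{6}h^2+h^3\), giving \(\chi(\mathcal{O}_C)=a+2e=1-g\), so \(a=1-g-2e\). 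Then \(\textnormal{ch}(\mathcal{I}_Z)=1-e\,h^2-(1-g-2e+\ell)[pt]\).

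\emph{Step 2: twist and subtract.} Twisting multiplies by \(e^{(d+2)h}\), so
\begin{equation*}
\textnormal{ch}(\mathcal{I}_Z(d+2))=\Big(1-e h^2-(1-g-2e+\ell)[pt]\Big)\Big(1+(d+2)h+\tfrac{(d+2)^2}{2}h^2+\tfrac{(d+2)^3}{6}h^3\Big).
\end{equation*}
From the Euler sequence, \(\textnormal{ch}(\tpb)=4e^{h}-1\). Additivity in (\ref{eq1}) gives \(\textnormal{ch}(\Tf)=\textnormal{ch}(\tpb)-\textnormal{ch}(\mathcal{I}_Z(d+2))\), which yields \(\textnormal{ch}_1,\textnormal{ch}_2,\textnormal{ch}_3\) of \(\Tf\).

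\emph{Step 3: recover the Chern classes.} For rank \(2\) we have \(c_1=\textnormal{ch}_1\), \(c_2=\tfrac12 c_1^2-\textnormal{ch}_2\) and \(c_3=2\textnormal{ch}_3-c_1^3/3+c_1c_2\), which inverts Newton's identities. Degree one gives \(c_1(\Tf)=4-(d+2)=2-d\). Degree two should give \(c_2(\Tf)=d^2+2-e\). Degree three gives \(c_3(\Tf)\) as a linear expression in \(\ell\), \(e\), \(g\) with polynomial coefficients in \(d\).

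\emph{Step 4: the identification \(c_3=\ell\).} I will establish this independently, because it is what turns the formula into \(\ell=d^3+2d^2+2d-e(3d-2)+2g-2\). For a rank \(2\) reflexive sheaf, \(c_3=h^0(\ext^1(\Tf,\op))\). Dualizing (\ref{eq1}) gives \(\ext^1(\Tf,\op)\cong\ext^2(\mathcal{I}_Z,\op)\cong\ext^3(\mathcal{O}_Z,\op)\). Since \(C\) is Cohen--Macaulay of pure dimension one, \(\ext^3(\mathcal{O}_C,\op)=0\), and the local-to-global sequence yields \(\ext^3(\mathcal{O}_Z,\op)\cong\ext^3(\mathcal{U},\op)\). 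The latter has length \(\ell\) by local duality. Comparing with Step 3 then solves for the stated formula, and consistency of the two expressions serves as a check on the arithmetic.

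I expect the main obstacle to be this last step: justifying the vanishing of \(\ext^3(\mathcal{O}_C,\op)\) and controlling the long exact \(\ext\) sequence, especially the connecting maps between \(\ext^2(\mathcal{O}_C,\op)\) and \(\ext^3(\mathcal{U},\op)\). The degree-three Chern character bookkeeping is routine but error-prone.
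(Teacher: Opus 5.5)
Your proposal is correct and is essentially the standard argument of \cite{CCJ}, which the paper quotes without reproducing a proof: Chern class bookkeeping in (\ref{eq1}) combined with $c_3(\Tf)=h^0(\ext^1(\Tf,\op))$ and the chain $\ext^1(\Tf,\op)\cong\ext^2(\mathcal{I}_Z,\op)(-d-2)\cong\ext^3(\mathcal{O}_Z,\op)(-d-2)\cong\ext^3(\mathcal{U},\op)(-d-2)$. Two small remarks: Step 3 actually yields $c_3(\Tf)=2\ell-\big(d^3+2d^2+2d-e(3d-2)+2g-2\big)$, so Step 4 is what solves for $\ell$ rather than a consistency check; and the step you flag as the main obstacle is harmless, because in the long exact sequence of $0\to\mathcal{U}\to\mathcal{O}_Z\to\mathcal{O}_C\to 0$ the connecting maps go $\ext^i(\mathcal{U},\op)\to\ext^{i+1}(\mathcal{O}_C,\op)$ and $\ext^3(\mathcal{O}_C,\op)=\ext^4(\mathcal{O}_C,\op)=0$.
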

Notice that the singular scheme \(\textnormal{Sing}(\mathcal{F})\) has no isolated points if and only if \(\Tf\) is locally free, this is a particular case of a more general result \cite[Lemma 2.1]{CCJ}. 

\begin{example}\label{nullcorrelation}
    The Null-correlation bundle \(N\) defines a natural structure of a codimension one distribution on \(\p\). This bundle is defined as the kernel of an epimorphism showed by the exact sequence
    \begin{equation*}
        0 \longrightarrow N \longrightarrow \tpb(-1) \longrightarrow \op(1) \longrightarrow 0.
    \end{equation*}
    If we tensor by \(\op(1)\) we have
    \begin{equation*}
        0 \longrightarrow N(1) \longrightarrow \tpb \longrightarrow \op(2) \longrightarrow 0,
    \end{equation*}
    which is a regular codimension one distribution of degree 0. In fact, this is the only case of a regular distribution on \(\p\), as showed in \cite{CCJ}. Since foliations on the projective space always have singularities, this is an example of a non-integrable distribution on \(\p\).
\end{example}

%By \cite[Corollary 4.8]{soares2005holomorphic}, we obtain upper bounds 
%\begin{equation*}
%    \textnormal{deg}(C) \leq d^2 + d + 1
%\end{equation*}
%and therefore lower bounds for $c_2(\Tf)$. Combining the results we get
%\begin{equation}\label{c2tf}
%    1 - d \leq c_2(\Tf) \leq d^2 + 2.
%\end{equation}

\paragraph{}
Fixing a rank 2 reflexive sheaf on $\p$ and constructing an explicit map to the tangent sheaf of $\p$ that is injective and has torsion-free cokernel can be a very hard task. However, in \cite[Appendix 1]{CCJ} the authors showed that if \(G\) is a rank 2 reflexive sheaf globally generated, then \(G^{\vee}(1)\) is the tangent sheaf of a codimension one distribution of degree \(c_1(G)\). We may use this result to give some cohomological conditions that guarantee the existence of distributions.

\begin{corollary}\label{regular}
    Let $d \geq 0$ and fix $F$ a rank 2 reflexive sheaf on $\p$ with $c_1(F) = 2-d$. If $F$ is a $(d-1)$-regular sheaf, then there exist a codimension one distribution $\mathcal{F}$ of degree $d$ with tangent sheaf $\Tf \cong F$.
\end{corollary}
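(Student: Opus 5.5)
The plan is to reduce directly to the result of \cite[Appendix 1]{CCJ} quoted just above: if $G$ is a globally generated rank 2 reflexive sheaf on $\p$, then $G^{\vee}(1)$ is the tangent sheaf of a codimension one distribution of degree $c_1(G)$. So I need a globally generated rank 2 reflexive $G$ with $c_1(G)=d$ and $G^{\vee}(1)\cong F$, and the regularity hypothesis should give exactly that.

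I take $G := F(d-1)$, which is rank 2 and reflexive because twisting preserves reflexivity. Its first Chern class is
\begin{equation*}
c_1(G) = c_1(F) + 2(d-1) = (2-d) + 2d - 2 = d.
\end{equation*}
To identify $G^{\vee}(1)$ I use the standard isomorphism $F^{\vee} \cong F(-c_1(F))$ for rank 2 reflexive sheaves, which comes from $F \otimes F \to \det F$ being a perfect pairing off a finite set and both sides being reflexive. Then
\begin{equation*}
G^{\vee}(1) = F^{\vee}(2-d) \cong F(d-2)(2-d) = F.
\end{equation*}

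It remains to show that $G$ is globally generated. By definition, $F$ being $(d-1)$-regular in the Castelnuovo--Mumford sense means $H^i(F(d-1-i))=0$ for $i>0$, which says precisely that $G=F(d-1)$ is $0$-regular. Mumford's theorem, which holds for any coherent sheaf on projective space, says that a $0$-regular sheaf is globally generated. Hence $G$ is globally generated.

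Applying \cite[Appendix 1]{CCJ} to $G$ then produces a codimension one distribution of degree $c_1(G)=d$ with tangent sheaf $G^{\vee}(1)\cong F$. The argument is essentially bookkeeping, so there is no real obstacle. The only points to check are the duality $F^{\vee}\cong F(-c_1)$ in the reflexive, not necessarily locally free, setting, and that the boundary case $d=0$ (regularity index $-1$) is still covered by Mumford's theorem. Both are standard.
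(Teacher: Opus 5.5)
Your proposal is correct and is exactly the paper's argument. You take $G=F(d-1)$, get global generation from Castelnuovo--Mumford, compute $c_1(G)=d$, use $F^{\vee}\cong F(-c_1(F))$ to get $G^{\vee}(1)\cong F$, and then invoke the appendix result of \cite{CCJ}. Your side remarks on the reflexive duality and the boundary case $d=0$ are valid points that the paper leaves implicit.
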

\begin{proof}
    Since $F$ is $(d-1)$-regular, by a Castelnuovo-Mumford Theorem \cite[Page 99]{mumford2016lectures}, we know that $F(d-1)$ is globally generated. Then $F(d-1)^{\vee}(1)$ is the tangent sheaf of a codimension one distribution of degree $c_1(F(d-1)) = c_1(F) + 2 (d-1) = d$. Also,
    \begin{equation*}
        F(d-1)^{\vee}(1) \cong F^{\vee}(2-d) \cong F(-c_1 + 2- d) \cong F.
    \end{equation*}
    With this we obtain the result.
\end{proof}
In summary, fix \(F\) a rank 2 reflexive sheaf with \(c_1(F) = 2-d\) and \(d \geq 0\). If
\begin{equation*}
    H^i(F(d-1-i)) = 0 \ \ \textnormal{for} \ \ i = 1,2,3,
\end{equation*}
then there exists a codimension one distribution of degree \(d\) with tangent sheaf \(F\).

\paragraph{} 
Now, let us define the following invariant of a codimension one distribution \(\mathcal{F}\) on \(\p\):
\begin{definition}\label{menorseçao}
    Let \(\mathcal{F}\) be a codimension one distribution on \(\p\). We define
    \begin{equation*}
    t_{\mathcal{F}} := \textnormal{min}\{k \in \Z \mid H^0(\Tf(k -1)) \neq 0 \},
\end{equation*}
the minimum integer such that the tangent sheaf of \(\mathcal{F}\) has a global section. Since \(\Tf\) is a subsheaf of \(\tpb\), we get that \(t_{\mathcal{F}} \geq 0\).
\end{definition}

Our main interest in this paper is to study codimension one distributions with fixed \(t_{\mathcal{F}}\). As we will see in the next section, for small values of \(t_{\mathcal{F}}\) the sheaf \(\Tf\) must be unstable.

Let \(\sigma \in H^0(\Tf(t_{\mathcal{F}}-1))\) be a nonzero section. The minimality implies that the zeros of \(\sigma\) must be a Cohen-Macaulay curve \(Y\), and \(Y\) is nonempty if and only if \(\Tf\) is nonsplit. We can construct a subfoliation by curves $\mathcal{G}$ of degree $t_{\mathcal{F}}$ by the following commutative diagram
\begin{equation}\label{subfol}
        \begin{tikzcd}
            & 0 \arrow{d} & 0 \arrow{d} \\ & \op(1 - t_{\mathcal{F}}) \arrow{d} \arrow[equal]{r} & \op(1 - t_{\mathcal{F}}) \arrow{d}  &   \\ 0 \arrow{r} & \Tf \arrow{d} \arrow{r} & \tpb \arrow{d} \arrow{r} & \mathcal{I}_Z(d+2) \arrow[equal]{d} \arrow{r} & 0 \\
            0 \arrow{r} & \mathcal{I}_Y(1 +t_{\mathcal{F}} -d) \arrow{d} \arrow{r} & N_{\mathcal{G}} \arrow{r} \arrow{d} & \mathcal{I}_Z(d+2) \arrow{d} \arrow{r} & 0 \\ & 0 & 0 & 0
        \end{tikzcd}
\end{equation}
The subfoliation by curves \(\mathcal{G}\) is defined by the middle column. So \(t_{\mathcal{F}}\) is the smallest degree of a subfoliation by curves that \(\mathcal{F}\) admits. This construction satisfy the conditions of \cite[Proposition 3.4.]{correa2026holomorphicfoliationsdegreearbitrary} which showed that
\begin{equation}\label{Y em sing}
     Y  \subset \textnormal{Sing}(\mathcal{G}).
\end{equation}

The authors of \cite[Proposition 10.3.]{mendson2024codimension} showed a sharp bound on the minimal degree of a subfoliation by curves of \(\mathcal{F}\), by taking it's intersection with a general codimension one foliation of degree 0. In our notation is stated as
\begin{equation}\label{cotatf}
    t_{\mathcal{F}} \leq d + 1.
\end{equation}
Moreover, if \(\mathcal{F}\) is a foliation, then \(t_{\mathcal{F}} \leq d\).

\begin{remark}
    In \cite{hartshorne1982stable}, the author showed a bound on the minimal twist for which a rank 2 reflexive sheaf admits a global section in terms of the Chern classes. Using this Theorem together with Theorem \ref{invariantes}, we obtain that \(t_{\mathcal{F}} \leq 2d - 1\). This bound depends only on the sheaf, so being a subsheaf of \(\tpb\) implies a better bound as (\ref{cotatf}).
\end{remark}

\begin{proposition}\label{novaprop}
    Let \(\mathcal{F}\) be a codimension one distribution and \(\mathcal{G}\) a subfoliation by curves defined by some \(\sigma \in H^0(\Tf(t_{\mathcal{F}} - 1))\). If \(\mathcal{G}\) has only isolated singularities, then \(\Tf\) splits.
\end{proposition}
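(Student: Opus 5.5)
The plan is to use the inclusion (\ref{Y em sing}), namely \(Y \subset \textnormal{Sing}(\mathcal{G})\), where \(Y\) is the zero scheme of the section \(\sigma \in H^0(\Tf(t_{\mathcal{F}}-1))\) built from the minimality of \(t_{\mathcal{F}}\). If \(\mathcal{G}\) has only isolated singularities, then \(\textnormal{Sing}(\mathcal{G})\) is zero-dimensional. The curve \(Y\), however, is either empty or a pure one-dimensional Cohen--Macaulay subscheme. A nonempty pure one-dimensional scheme cannot sit inside a zero-dimensional scheme, so \(Y = \emptyset\).

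Once \(Y=\emptyset\), the left column of diagram (\ref{subfol}) reads
\begin{equation*}
    0 \longrightarrow \op(1-t_{\mathcal{F}}) \longrightarrow \Tf \longrightarrow \mathcal{I}_Y(1+t_{\mathcal{F}}-d) \longrightarrow 0,
\end{equation*}
with \(\mathcal{I}_Y = \op\). Hence \(\Tf\) is an extension of \(\op(1+t_{\mathcal{F}}-d)\) by \(\op(1-t_{\mathcal{F}})\). Such extensions are classified by
\begin{equation*}
    \Ext^1\big(\op(1+t_{\mathcal{F}}-d),\op(1-t_{\mathcal{F}})\big) = H^1\big(\op(d-2t_{\mathcal{F}})\big) = 0,
\end{equation*}
which vanishes on \(\p\). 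So the sequence splits and \(\Tf \cong \op(1-t_{\mathcal{F}}) \oplus \op(1+t_{\mathcal{F}}-d)\). Equivalently, one can cite the remark right before (\ref{subfol}) that \(Y\) is nonempty if and only if \(\Tf\) is nonsplit.

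The only delicate point is justifying that the cokernel of \(\sigma\) is \(\mathcal{I}_Y\) twisted by \(\det\), with \(Y\) of pure dimension one or empty. This follows from the minimality argument already given in Section 2.2: a torsion part in the cokernel would produce a section of \(\Tf(t_{\mathcal{F}}-1-k)\) with \(k>0\), contradicting the definition of \(t_{\mathcal{F}}\). Since \(\Tf\) is reflexive of rank 2, the zero scheme of a section with torsion-free cokernel has codimension at least 2 and is Cohen--Macaulay of pure dimension 1 when nonempty. With that and (\ref{Y em sing}), the argument is immediate.
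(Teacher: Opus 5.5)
Your proof is correct and follows the same route as the paper. The inclusion \(Y \subset \textnormal{Sing}(\mathcal{G})\) from (\ref{Y em sing}) forces \(Y=\emptyset\) when \(\textnormal{Sing}(\mathcal{G})\) is zero-dimensional, and you usefully make explicit the splitting step the paper leaves implicit, namely \(\Ext^1\big(\op(1+t_{\mathcal{F}}-d),\op(1-t_{\mathcal{F}})\big)=H^1\big(\op(d-2t_{\mathcal{F}})\big)=0\).
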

\begin{proof}
    If \(\mathcal{G}\) is a foliation by curves with only isolated singularities, then \(\textnormal{Sing}(\mathcal{G})\) is a 0-dimensional subscheme. So (\ref{Y em sing}) implies that \(Y = \emptyset\) and then \(\Tf\) splits. 
    %Now suppose \(\Tf\) splits, then \(Y = \emptyset\) and \(Z = \textnormal{Sing}(\mathcal{F})\) is a curve with no isolated points. Therefore \(\Ng\) would be defined by an element of \(\Ext^1(\mathcal{I}_Z(2d+1 - t_{\mathcal{F}}), \op)\) where \(Z\) is an ACM curve by \cite[Theorem 1]{correa2015singular}. Such extensions are in correspondence with reflexive sheaves by the Serre correspondence, so \(\Ng\) is reflexive and then \(\textnormal{Sing}(\mathcal{G}) = \textnormal{Sing}(\Ng)\) has codimension 3 on \(\p\) by \cite[Lemma 1.1.10.]{okonek1980vector}.
\end{proof} 

This shows that, for a codimension one distribution \(\mathcal{F}\) to admit a subfoliation by curves, some conditions are imposed on \(\mathcal{F}\). In \cite[Lemma 4.3]{CCJ}, the authors first presented this construction and proved that, if a codimension one distribution admits a subfoliation by curves of degree \(0\), then its tangent sheaf splits. In other words, if \(t_{\mathcal{F}} = 0\), then \(\Tf\) splits. We may rewrite this result to obtain the tangent sheaf explicitly.

\begin{corollary}\label{tf=-1}
    Let $\mathcal{F}$ be a codimension one distribution of degree $d$ on $\p$, such that \(t_{\mathcal{F}} = 0\). Then
    \begin{equation*}
        \Tf = \op(1) \oplus \op(1-d) \ \ \ \textnormal{and} \ \ d \geq 0  .
    \end{equation*}
\end{corollary}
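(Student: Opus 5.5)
The plan is to feed the subfoliation construction (\ref{subfol}) the value \(t_{\mathcal{F}} = 0\), so that \(\Tf\) acquires a nowhere vanishing section, and then to read off the splitting type from the Chern classes.

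With \(t_{\mathcal{F}} = 0\), there is a nonzero section \(\sigma \in H^0(\Tf(-1))\). By minimality, \(H^0(\Tf(-2)) = 0\), so, as explained after Definition \ref{menorseçao}, the zero scheme \(Y\) of \(\sigma\) is either empty or a Cohen--Macaulay curve. This gives the left column of (\ref{subfol}):
\begin{equation*}
0 \longrightarrow \op(1) \longrightarrow \Tf \longrightarrow \mathcal{I}_Y(1-d) \longrightarrow 0,
\end{equation*}
where we used \(c_1(\Tf) = 2-d\) from Theorem \ref{invariantes}. The middle column shows that the subfoliation by curves \(\mathcal{G}\) has degree \(0\), i.e.\ \(\op(1) \hookrightarrow \tpb\).

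The key step is to show that \(Y = \emptyset\). For this I would invoke \cite[Lemma 4.3]{CCJ}, as recalled just before the statement. Alternatively, one can argue directly. A degree \(0\) foliation by curves is given by a section of \(\tpb(-1)\), which comes from a constant vector \(v \in \C^4\) modulo the Euler field. Its singular scheme is defined by the \(2\times 2\) minors of \(\begin{pmatrix} v_0 & \cdots & v_3 \\ x_0 & \cdots & x_3\end{pmatrix}\), and this is the single reduced point \([v]\). So \(\mathcal{G}\) has only isolated singularities, and Proposition \ref{novaprop} (through the inclusion (\ref{Y em sing})) forces \(Y = \emptyset\).

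It follows that \(\Tf\) is an extension of \(\op(1-d)\) by \(\op(1)\). Since \(\Ext^1(\op(1-d),\op(1)) = H^1(\op(d)) = 0\), the extension splits, and
\begin{equation*}
\Tf \cong \op(1) \oplus \op(1-d).
\end{equation*}

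Finally, the degree bound \(d \geq 0\) is already part of the definition of a codimension one distribution of degree \(d\). It can also be recovered directly: the summand \(\op(1-d)\) injects into \(\tpb\), so \(H^0(\tpb(d-1)) \neq 0\). By the Euler sequence this forces \(d-1 \geq -1\), that is, \(d \geq 0\).

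The only nontrivial input is the emptiness of \(Y\), i.e.\ the fact that degree \(0\) foliations by curves have isolated singularities; everything else is Chern class bookkeeping.
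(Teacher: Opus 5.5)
Your argument is correct. Its backbone matches the paper's proof, which quotes the splitting lemma (Lemma 4.3 of CCJ) and then pins down the summands by the minimal twist. Writing \(\Tf=\op(a)\oplus\op(b)\) with \(a\ge b\), the conditions \(H^0(\Tf(-1))\neq 0\) and \(H^0(\Tf(-2))=0\) force \(a=1\), hence \(b=1-d\).

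You differ from the paper in two respects.

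First, you give a self-contained proof of the splitting. The degree \(0\) foliation by curves is the radial one, and its singular scheme is the single reduced point \([v]\). So \(Y\subset\textnormal{Sing}(\mathcal{G})\) forces the pure one-dimensional \(Y\) to be empty. This is Proposition \ref{novaprop} specialized to \(t_{\mathcal{F}}=0\), and it removes the dependence on the external lemma. The inclusion itself is elementary here, since the composite vector field vanishes wherever \(\sigma\) does.

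Second, you read the summands straight off the extension \(0\to\op(1)\to\Tf\to\op(1-d)\to 0\), using \(\Ext^1(\op(1-d),\op(1))=H^1(\op(d))=0\). So no separate minimal-twist bookkeeping is needed.

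The paper's version is shorter. Yours makes explicit that the only geometric input is that degree \(0\) foliations by curves have isolated singularities.

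Two cosmetic remarks:
\begin{itemize}
\item In degree \(0\) there is no ``modulo the Euler field'' ambiguity. Since \(H^0(\op(-1))=0\), you get \(H^0(\tpb(-1))\cong\C^4\) on the nose.
\item The bound \(d\ge 0\) is not literally part of the paper's definition, but your argument via \(H^0(\tpb(d-1))\neq 0\) supplies it.
\end{itemize}
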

\begin{proof}
    Since \(t_{\mathcal{F}} = 0\), \(\Tf\) must split. Since \(H^0(\Tf(-1)) \neq 0\) and \(H^0(\Tf(-2)) = 0\), \(\op(1)\) must be one of the summands, and thus the other one must be \(\op(1-d)\).
\end{proof}

\begin{remark}
    We may describe the tangent sheaf of split codimension one distributions in therms of the degree and \(t_{\mathcal{F}}\):
    \begin{equation*}
        \Tf = \op(1-t_{\mathcal{F}}) \oplus \op(1 + t_{\mathcal{F}} - d) \ \ \ \textnormal{with} \ \ \ d \geq 2t_{\mathcal{F}}
    \end{equation*}
    So in this case, the tangent sheaf is uniquely determined by those invariants. Expanding for small values we have:
    \begin{center}
    %Split Distributions in therms of \(d\) and \(t_{\mathcal{F}}\)}
    \vspace{0.1cm}
    \begin{tabular}{| c | c  c  c  c |}
        \hline
         \ \ \ $d$ \ / \  $t_{\mathcal{F}}$ \ \ \ & \ \ \ $0$ \ \ \ & \ \ \ $1$ \ \ \ & \ \ \ $2$ \ \ \ & \ \ \ $3$ \ \ \ \\
        \hline
        0 & \(\op(1) \oplus \op(1)\) & \(\times\) & \(\times\) & \(\times\)  \\
        1 & \(\op(1) \oplus \op\) & \(\times\) & \(\times\) & \(\times\)   \\
        2 & \(\op(1) \oplus \op(-1)\) & \(\op \oplus \op\) & \(\times\) & \(\times\)   \\
        3 & \(\op(1) \oplus \op(-2)\) & \(\op \oplus \op(-1)\) & \(\times\) & \(\times\)   \\
        4 & \(\op(1) \oplus \op(-3)\) & \(\op \oplus \op(-2)\) & \(\op(-1) \oplus \op(-1)\) & \(\times\)  \\
        5 & \(\op(1) \oplus \op(-4)\) & \(\op \oplus \op(-3)\) & \(\op(-1) \oplus \op(-2)\) & \(\times\)  \\
        6 & \(\op(1) \oplus \op(-5)\) & \(\op \oplus \op(-4)\) & \(\op(-1) \oplus \op(-3)\) & \(\op(-2) \oplus \op(-2)\)  \\
        \hline
    \end{tabular}
    \end{center}
    \begin{center}
        Table 1: Split Distributions in therms of \(d\) and \(t_{\mathcal{F}}\)
    \end{center}
\end{remark}

\begin{example}\label{splitruim}
    The only codimension one distributions \(\mathcal{F}\) on \(\p\) with tangent sheaf split and \(\mu\)-semistable are the first entries of every collum on Table 1. Notice that in this cases we have the equality 
    \begin{equation*}
        d = 2t_{\mathcal{F}}
    \end{equation*}
    and 
    \begin{equation*}
        \Tf= \op(1 - t_{\mathcal{F}}) \oplus \op(1-t_{\mathcal{F}}).
    \end{equation*}
    So it's easier to write every invariant of the distribution in therms of \(t_{\mathcal{F}}\). Since \(\Tf\) is locally free, the singular scheme is a pure 1-dimensional subscheme of \(\p\), that is \(\textnormal{Sing}(\mathcal{F}) = \textnormal{Sing}_1(\mathcal{F}) = C\). Using Theorem \ref{invariantes}, we obtain that
    \begin{align*}
        \deg(C) &= 3t_{\mathcal{F}}^2 + 2t_{\mathcal{F}} +1 \\
        p_a(C) &= t_{\mathcal{F}}(5 t_{\mathcal{F}}^2 - t_{\mathcal{F}} - 1)
    \end{align*}
    for every \(t_{\mathcal{F}} \geq 0\). All such distributions have the same normalization \((\Tf)_{\eta} = \op \oplus \op\).
\end{example}

Since we are interested in unstable sheaves, we will add the assumption of the tangent sheaf being nonsplit to avoid the above case.

\begin{lemma}\label{h^0=1}
    Let \(\mathcal{F}\) be a codimension one distribution of degree \(d\) on \(\p\) such that \(d > 2 t_{\mathcal{F}}\). Then \(h^0(\Tf(t_{\mathcal{F}}-1)) = 1\). Moreover, if \(\Tf\) is nonsplit the result is valid for \(d = 2t_{\mathcal{F}}\).
\end{lemma}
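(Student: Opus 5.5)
Write \(t = t_{\mathcal{F}}\) and fix a nonzero \(\sigma \in H^0(\Tf(t-1))\). By the discussion preceding Definition \ref{menorseçao}, the minimality of \(t\) gives the exact sequence
\begin{equation*}
    0 \longrightarrow \op \xlongrightarrow{\sigma} \Tf(t-1) \longrightarrow \mathcal{I}_Y(c_1(\Tf) + 2t - 2) \longrightarrow 0,
\end{equation*}
where \(Y\) is a (possibly empty) Cohen--Macaulay curve. Here \(c_1(\Tf) + 2t - 2 = 2t - d\) by Theorem \ref{invariantes}, so the quotient is \(\mathcal{I}_Y(2t-d)\). This agrees with the left column of (\ref{subfol}) after twisting by \(t-1\).

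Taking global sections and using \(H^1(\op) = 0\), I get
\begin{equation*}
    h^0(\Tf(t-1)) = 1 + h^0(\mathcal{I}_Y(2t-d)).
\end{equation*}
So it suffices to show that \(H^0(\mathcal{I}_Y(2t-d)) = 0\).

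\begin{itemize}
\item If \(d > 2t\), then \(2t - d < 0\). Since \(\mathcal{I}_Y(2t-d) \subset \op(2t-d)\) and \(H^0(\op(2t-d)) = 0\), the vanishing is immediate, whether or not \(Y\) is empty.
\item If \(d = 2t\), the quotient is \(\mathcal{I}_Y\). When \(\Tf\) is nonsplit, \(Y\) is nonempty (again by the remark before (\ref{subfol})). Then \(H^0(\mathcal{I}_Y) = 0\), because a nonzero constant cannot vanish on a nonempty subscheme. Hence \(h^0 = 1\).
\end{itemize}

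The hypothesis on splitting is genuinely needed when \(d = 2t\). In the split case \(\Tf = \op(1-t)^{\oplus 2}\) of Example \ref{splitruim}, one has \(h^0 = 2\).

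The only point requiring care is justifying the exact sequence: that the cokernel of \(\sigma\) is torsion-free of the form \(\mathcal{I}_Y\) twisted by \(\det\). This is the minimality argument already given in the paper, so I expect no real obstacle. The first Chern class computation is direct from Theorem \ref{invariantes}.
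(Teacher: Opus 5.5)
Your proof is correct and follows the paper's own argument. Both use the sequence $0 \to \op \to \Tf(t_{\mathcal{F}}-1) \to \mathcal{I}_Y(2t_{\mathcal{F}}-d) \to 0$, reduce to the vanishing of $H^0(\mathcal{I}_Y(2t_{\mathcal{F}}-d))$, and split into the same two cases: a negative twist, and $Y \neq \emptyset$ when $\Tf$ is nonsplit. You are slightly more explicit than the paper in invoking $H^1(\op)=0$ and in checking, via the split example $\op(1-t_{\mathcal{F}})^{\oplus 2}$, that the nonsplit hypothesis is needed when $d = 2t_{\mathcal{F}}$.
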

\begin{proof}
    Suppose that \(d > 2 t_{\mathcal{F}}\) and consider the exact sequence
    \begin{equation*}
        0 \longrightarrow \op \longrightarrow \Tf(t_{\mathcal{F}} - 1) \longrightarrow \mathcal{I}_Y(2t_{\mathcal{F} } - d) \longrightarrow 0.
    \end{equation*}
    By our assumption the sheaf \(\mathcal{I}_Y(2t_{\mathcal{F}} - d)\) cannot have a global section, even if \(Y = \emptyset\). Therefore \(h^0(\Tf(t_{\mathcal{F}}-1)) = h^0(\op) = 1\). If \(\Tf\) is nonsplit, then \(Y\) is nonempty and therefore \(H^0(\mathcal{I}_Y) = 0\).
\end{proof}

The above result gives conditions to when the minimal subfoliation by curves is unique. Notice that the Examples \ref{nullcorrelation} and \ref{splitruim} does not satisfy the relation \(d > 2 t_{\mathcal{F}}\) and both have \(h^0(\Tf(t_{\mathcal{F}} -1)) > 1\).

\begin{proposition}\label{Tfcomseção}
    Let $\mathcal{F}$ be a codimension one distribution of degree $d$, such that \(t_{\mathcal{F}} = 1\). Then
    \begin{enumerate}
        \item $\Tf = \op \oplus \op(2-d)$ with $d \geq 2$; or

        \item $\Tf$ has Chern classes $(2-d,1,d)$ with $d \geq 1$; or 

        \item $\Tf$ has Chern classes $(2-d,2,2d)$ with $d \geq 0$.
    \end{enumerate}
\end{proposition}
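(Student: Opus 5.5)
With \(t_{\mathcal{F}} = 1\) I will work with the section \(\sigma \in H^0(\Tf)\) and the subfoliation by curves \(\mathcal{G}\) of degree \(1\) it produces through diagram (\ref{subfol}). Twisting appropriately, \(\sigma\) gives
\begin{equation*}
    0 \longrightarrow \op \longrightarrow \Tf \longrightarrow \mathcal{I}_Y(2-d) \longrightarrow 0,
\end{equation*}
where \(Y\) is either empty or a Cohen--Macaulay curve. Taking Chern classes gives \(c_1(\Tf) = 2-d\), \(c_2(\Tf) = \deg(Y)\) and \(c_3(\Tf) = 2p_a(Y) - 2 + \deg(Y)(4-(2-d)) = 2p_a(Y)-2+\deg(Y)(d+2)\). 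This is the standard Serre-correspondence formula \(c_3 = 2p_a - 2 + \deg(Y)(4-c_1)\) for \(\mathcal{O}\to F\to \mathcal{I}_Y(c_1)\).

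If \(Y=\emptyset\), the sequence splits and \(\Tf=\op\oplus\op(2-d)\). Minimality \(H^0(\Tf(-1))=0\) forces \(2-d\le 0\), so \(d\ge 2\); this is item 1. Otherwise \(Y\neq\emptyset\), and by (\ref{Y em sing}) we have \(Y\subset \textnormal{Sing}(\mathcal{G})\). Since \(Y\) is pure one-dimensional, \(Y\subset \textnormal{Sing}_1(\mathcal{G})\).

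Next I apply the classification of degree-1 foliations by curves from \cite[Theorem 4.]{galeano2022codimension}. In case 1 the singular scheme is zero-dimensional, which is impossible here by Proposition \ref{novaprop}. In case 2, \(\textnormal{Sing}_1(\mathcal{G})\) is a line \(L\), and \(Y\subset L\) with \(Y\) Cohen--Macaulay forces \(Y=L\). Then \(\deg Y = 1\), \(p_a = 0\), so \(c_2 = 1\) and \(c_3 = -2+d+2 = d\); this is item 2. In case 3, \(\textnormal{Sing}(\mathcal{G})\) is a pair of skew lines or a double line of genus \(-1\), and \(Y\) is a nonempty CM subcurve of it. So \(Y\) is either one line, giving again \((2-d,1,d)\), or the whole scheme, with \(\deg Y=2\) and \(p_a=-1\). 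The latter gives \(c_2=2\) and \(c_3=-4+2(d+2)=2d\); this is item 3.

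The step needing care is the list of CM subcurves of the double line: a degree-2 CM subscheme supported on a line and contained in the genus \(-1\) double structure must equal it, and the only degree-1 one is the reduced line.

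The remaining step is the lower bounds on \(d\), which come from \(c_3\ge 0\) for a reflexive sheaf together with the requirement that \(\Tf\) be a subsheaf of \(\tpb\).
\begin{itemize}[label={}]
    \item In item 2, \(c_3=d\ge 0\). To exclude \(d=0\), note that \(c_3 = 0\) would make \(\Tf\) locally free with \(c_1=2\), \(c_2=1\); but the only regular distribution of degree \(0\) is the null-correlation case, with \(c_2(N(1)) = 2\). Hence \(d\ge1\).
    \item In item 3, \(c_3=2d\ge0\) gives \(d\ge 0\), and \(d=0\) is realized by Example \ref{nullcorrelation}, where the curve is two skew lines.
\end{itemize}
I expect the main obstacle to be justifying \(d\ge 1\) in item 2. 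If the regularity argument above does not suffice, I would fall back on Theorem \ref{invariantes}, computing \(\deg C = d^2+2-1\) and checking it against the singular scheme at \(d=0\).
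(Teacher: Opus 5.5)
Your derivation of the Chern classes is correct, and it takes a more direct route than the paper. You use the scheme-theoretic inclusion $Y\subset\textnormal{Sing}(\mathcal{G})$, pass to $\textnormal{Sing}_1(\mathcal{G})$ by purity of $Y$, and read $Y$ off the classification of degree-one foliations by curves. The paper instead dualizes the relative normal sequence and extracts a curve $K\subseteq C=\textnormal{Sing}_1(\mathcal{F})$ with $\deg K=c_2(\Ng^{\vee})+d^2-4$. It then compares $\deg K$ with $\deg C<d^2+2$. The paper's route also pins down $C$, which it reuses in Theorem \ref{oteorema}; for the proposition itself your route suffices.

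The gap is the exclusion of $d=0$ in item 2. You conflate ``$\Tf$ locally free'' with ``$\mathcal{F}$ regular''. The condition $c_3(\Tf)=0$ only says that $\textnormal{Sing}(\mathcal{F})$ has no isolated points, not that it is empty.

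In fact, by Theorem \ref{invariantes}, a degree-$0$ distribution with $c_2(\Tf)=1$ has $\textnormal{Sing}(\mathcal{F})$ a curve of degree $d^2+2-1=1$, i.e. a line. Such distributions exist: the pencil of planes through a line has $\Tf=\op(1)^{\oplus 2}$, whose Chern classes are exactly $(2,1,0)$. So comparing with $c_2(N(1))=2$ excludes nothing. Your fallback (checking $\deg C=1$ at $d=0$) is equally inconclusive, since $\deg C=1$ is actually realized.

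What is missing is an appeal to $t_{\mathcal{F}}=1$ itself:
\begin{itemize}
\item For $d=0$ and $Y=L$, the sequence $0\to\op\to\Tf\to\mathcal{I}_L(2)\to 0$ defines a class in $\Ext^1(\mathcal{I}_L(2),\op)$, which is one-dimensional.
\item Reflexivity of $\Tf$ forces this class to be nonzero, since $\op\oplus\mathcal{I}_L(2)$ is not reflexive.
\item Hence the extension is the twisted Koszul sequence $0\to\op\to\op(1)^{\oplus 2}\to\mathcal{I}_L(2)\to 0$, and $\Tf\cong\op(1)^{\oplus 2}$.
\item Then $H^0(\Tf(-1))\neq 0$, i.e. $t_{\mathcal{F}}=0$, a contradiction.
\end{itemize}
This is the paper's argument, and it covers every way $Y$ can be a single line, including inside a foliation with $\Ng^{\vee}=\op(-2)^{\oplus 2}$. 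Alternatively, you could quote the classification of degree-$0$ distributions ($\Tf\cong\op(1)^{\oplus 2}$ or $N(1)$) and observe that $c_2=1$ forces the former, which has $t_{\mathcal{F}}=0$.
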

\begin{proof}
    Let $\mathcal{F}$ be a codimension one distribution of degree $d$ and $Z = \textnormal{Sing}(\mathcal{F})$, given by
    \begin{equation*}
        0 \longrightarrow \Tf \longrightarrow \tpb \longrightarrow \mathcal{I}_Z(d+2) \longrightarrow 0.
    \end{equation*}
    Take $\sigma \in H^0(\Tf)$ a nonzero section. Since is the first global section by a twist, we know that the zeros $Y :=(\sigma)_0$ will be a codimension 2 subscheme of $\p$. If $Y$ is empty, then $\Tf$ is  split and by our hypothesis one of the copies must be $\op$, so we obtain
    \begin{equation*}
        \Tf = \op \oplus \op(2-d) \ \ \textnormal{and} \ \ d \geq 2.
    \end{equation*}
    
    In the following, we suppose that $Y$ is nonempty. The construction on (\ref{subfol}) show us that the composition $ \op \xhookrightarrow{\sigma} \Tf \hookrightarrow \tpb$ defines a subfoliation of degree 1, denoted by $\mathcal{G}$, and the comutative diagram
    \begin{equation*}
        \begin{tikzcd}
            & 0 \arrow{d} & 0 \arrow{d} \\ & \op \arrow{d} \arrow[equal]{r} & \op \arrow{d}  &   \\ 0 \arrow{r} & \Tf \arrow{d} \arrow{r} & \tpb \arrow{d} \arrow{r} & \mathcal{I}_Z(d+2) \arrow[equal]{d} \arrow{r} & 0 \\
            0 \arrow{r} & \mathcal{I}_Y(2-d) \arrow{d} \arrow{r} & N_{\mathcal{G}} \arrow{r} \arrow{d} & \mathcal{I}_Z(d+2) \arrow{d} \arrow{r} & 0 \\ & 0 & 0 & 0
        \end{tikzcd}
    \end{equation*}
    Dualizing the bottom line we get
    \begin{equation*}
        0 \rightarrow \op(-d-2) \rightarrow \Ng^{\vee} \rightarrow \op(d-2) \xrightarrow{\xi} \omega_C(2-d) \rightarrow
    \end{equation*}
    \begin{equation*}
        \rightarrow \ext^1(\Ng,\op) \rightarrow \omega_Y(2+d) \rightarrow \ext^3(\mathcal{U},\op) \rightarrow 0,
    \end{equation*}
    where $C = \textnormal{Sing}_1(\mathcal{F})$ and $\textnormal{Supp}(\mathcal{U}) = \textnormal{Sing}_0(\mathcal{F})$. We may break the exact sequence by defining the subscheme $K$ of \(\p\) as $\textnormal{ker}(\xi) = \mathcal{I}_K(d-2)$ and so
    \begin{equation*}
        \begin{tikzcd}
            &  &  & \mathcal{I}_K(d-2) \arrow[hookrightarrow]{d}  \\ 0 \arrow{r} & \op(-d-2)  \arrow{r} & \Ng^{\vee} \arrow[twoheadrightarrow]{ur} \arrow{r} & \op(d-2) \arrow[twoheadrightarrow]{d}\arrow{r} & \omega_C(2-d) \arrow{r} & \dots \\
            &    &   & \mathcal{O}_K(d-2) \arrow[hookrightarrow]{ur}    &
        \end{tikzcd}
    \end{equation*}
    So we get the short exact sequence
    \begin{equation*}
        0 \longrightarrow \op (-d-2) \longrightarrow \Ng^{\vee} \longrightarrow \mathcal{I}_K(d-2) \longrightarrow 0,
    \end{equation*}
    and then $K$ is the zero of a general section in $H^0(\Ng^{\vee}(d+2))$. Therefore
    \begin{equation*}
        \deg(K) = c_2(\Ng^{\vee}(d+2)) = c_2(\Ng^{\vee}) + d^2 - 4.
    \end{equation*}

    By the factorization, we have the inclusion $\mathcal{O}_K(d-2) \hookrightarrow \omega_C(2-d)$ which gives inclusion on the support 
    \begin{equation*}
        K \subseteq C.
    \end{equation*} 
    Since both schemes are pure of dimension 1, we gain the inequality
    \begin{equation*}
        \deg(K) \leq \deg(C).
    \end{equation*}
    Now, $\deg(Y) = c_2(\Tf)$ and $Y$ is nonempty, so $c_2(\Tf) > 0$ and by Theorem \ref{invariantes}, 
    \begin{equation*}
        \deg(C) < d^2 + 2.
    \end{equation*}
    
    Using the classification of degree 1 foliation by curves in \cite[Theorem 4.]{galeano2022codimension}, we may calculate every possibility of $\deg(K)$ and compare with the above condition on $C$.
    \begin{enumerate}
        \item If $\Ng^{\vee}$ is $\mu$-stable with $c_2(\Ng^{\vee}) = 6$, then it has only isolated singularities and by Proposition \ref{novaprop} \(\Tf\) would be split, which is an absurd.

        \item If $\Ng^{\vee}$ is strictly $\mu$-semistable with $c_2(\Ng^{\vee}) = 5$, then \(\deg(K) = d^2 + 1\), which implies that $\deg(C) = d^2 + 1$ and therefore \(K = C\). So we have 
        \begin{equation*}
            \deg(Y) = c_2(\Tf) = d^2 + 2 - \deg(C) = 1,
        \end{equation*}
        and $Y = L$ is a line. Since $Y$ is a line, then $p_a(Y) = 0$ and then
        \begin{equation*}
            c_3(\Tf) = 2p_a(Y) - 2 + 1(4-c_1(\Tf)) = -2+d+2 = d.
        \end{equation*}
        So the Chern classes of \(\Tf\) are $(c_1,c_2,c_3) = (2-d,1,d)$. In the case \(d = 0\), 
        the exact sequence
        \begin{equation*}
            0 \longrightarrow \op \longrightarrow \Tf \longrightarrow \mathcal{I}_L(2) \longrightarrow 0,
        \end{equation*}
        is a twist of the sequence of a line, which implies that \(\Tf = \op(1) \oplus \op(1)\), but this means that \(t_{\mathcal{F}} = 0\). Therefore, \(d \geq 1\).
        
        \item If $\Ng^{\vee} = \op(-2)^{\oplus 2}$, then $K$ is a complete intersection of degree $d^2$. We have two possibilities for $C$, one is $\deg(C) = d^2+1$, which is the same as the above case, so we may assume that $\deg(C) = d^2$ and then $C = K$. Therefore,
        \begin{equation*}
            \deg(Y) = c_2(\Tf) = d^2+2-\deg(C) = 2.
        \end{equation*}
        Since $C$ is the complete intersection of two hypersurfaces of degree $d$,
        \begin{equation*}
            p_a(C) = \frac{1}{2}dd(d+d-4)+1 = d^2(d-2)+1 = d^3 -2d^2 +1.
        \end{equation*}
        Applying the result on Theorem \ref{invariantes},
        \begin{align*}
            c_3(\Tf) &= d^3 + 2d^2 + 2d - \deg(C)(3d-2) + 2p_a(C) -2 \\
            &= d^3 + 2d^2 + 2d - d^2(3d-2) + 2(d^3 -2d^2 +1) -2 \\
            &= 2d.
        \end{align*}
        We conclude that the invariants of $\Tf$ will be $(2-d,2,2d)$. In this last case, for \(d = 0\) we have that \(\Tf\) have invariants \((2,2,0)\) and must be \(\Tf = N(1)\), as in Example \ref{nullcorrelation}. So \(t_{\mathcal{F}} = 1\) for every \(d \geq 0\).
    \end{enumerate}
\end{proof}

Notice that in the cases where the tangent sheaf is locally free and admits a subfoliation of degree 0 or~1, it either splits or is the null-correlation bundle in Example~\ref{nullcorrelation}, which is not a foliation. In other words, we have shown that if a codimension one foliation on \(\p\) with locally free tangent sheaf admits a subfoliation of degree 0 or 1, then \(\Tf\) splits. Our result agrees with \cite[Theorem~C]{dacosta2024splittingaspectsholomorphicdistributions}, which independently first proved this fact.

\begin{lemma}
    Let \(\mathcal{F}\) be a codimension one distribution on \(\p\) such that \(\Tf\) is split and \(\mathcal{G}\) the subfoliation by curves induced by \(\sigma \in H^0(\Tf(t_{\mathcal{F}}-1))\). Then
    \begin{equation*}
        \textnormal{Sing}(\mathcal{G}) \subset \textnormal{Sing}(\mathcal{F}).
    \end{equation*}
\end{lemma}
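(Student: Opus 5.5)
Set \(t := t_{\mathcal{F}}\). Since \(\Tf\) is split, the summand containing \(\sigma\) is forced, so
\begin{equation*}
\Tf = \op(1-t)\oplus\op(1+t-d),
\end{equation*}
as in the Remark after Corollary \ref{tf=-1}. The section \(\sigma\) is then nowhere vanishing, so \(Y=\emptyset\). Diagram (\ref{subfol}) becomes
\begin{equation*}
0 \longrightarrow \op(1+t-d) \longrightarrow \Ng \longrightarrow \mathcal{I}_Z(d+2) \longrightarrow 0 .
\end{equation*}
The aim is to show that \(\Ng\) is locally free at every point \(p\notin Z\). This gives \(\textnormal{Sing}(\mathcal{G}) = \textnormal{Sing}(\Ng) \subset Z\) as sets. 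For the scheme-theoretic inclusion I would compare the ideals directly, as explained at the end.

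\textbf{Local argument.} Take \(p \notin Z\). Near \(p\), \(\mathcal{I}_Z(d+2)\) is a line bundle, so the bottom sequence splits on a neighbourhood of \(p\). Hence \(\Ng\) is locally isomorphic to \(\mathcal{O}\oplus\mathcal{O}\), and \(p\notin \textnormal{Sing}(\Ng)\). The same conclusion follows from the middle row: off \(Z\), the inclusion \(\Tf\subset \tpb\) is a subbundle, and \(\op(1-t)\subset\Tf\) is a subbundle everywhere because \(\sigma\) has no zeros. A composition of subbundle inclusions is a subbundle inclusion, so \(\op(1-t)\to\tpb\) is a subbundle near \(p\). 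Its cokernel \(\Ng\) is therefore locally free there, and the vector field defining \(\mathcal{G}\) does not vanish at \(p\).

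\textbf{Scheme structure.} This step needs the most care, since \(\textnormal{Sing}\) should be read as a scheme. I would work in coordinates. Choose local frames \(e_1=\sigma\), \(e_2\) of \(\Tf\) and write \(v_i=\phi(e_i)\). Then \(\textnormal{Sing}(\mathcal{G})\) is cut out by the coefficients of \(v_1\), that is, by the \(1\times 1\) minors. The twisted 1-form \(\omega_{\mathcal{F}}\) corresponds to \(v_1\wedge v_2\), via \(\bigwedge^2 T\p\otimes \det(\Tf)^\vee\cong\Omega^1_{\p}(d+2)\). Its coefficients are the \(2\times2\) minors of the matrix \((v_1\,|\,v_2)\), and these are linear combinations of the coefficients of \(v_1\). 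Therefore
\begin{equation*}
\mathcal{I}_{\textnormal{Sing}(\mathcal{F})} \subset \mathcal{I}_{\textnormal{Sing}(\mathcal{G})},
\end{equation*}
which is exactly \(\textnormal{Sing}(\mathcal{G})\subset\textnormal{Sing}(\mathcal{F})\) as schemes.

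The main point to verify is that \(Z\), defined through \(\mathcal{I}_Z(d+2)\) in (\ref{eq1}), coincides with the zero scheme of \(\omega_{\mathcal{F}}\). The image of \(\Omega^1_{\p}\to \Tf^\vee\cong\mathcal{O}\oplus\mathcal{O}\) is determined by the pairing with \(v_1,v_2\), and I would deduce the identification from the standard fact that the cokernel of \(\Tf\to\tpb\) is \(\mathcal{I}_Z\) twisted by \(\det\), with \(Z\) the degeneracy locus of \(\phi\). The splitting hypothesis matters here in a specific way: it is what makes \(\sigma\) a nowhere-vanishing local frame element, and so what allows \(e_1=\sigma\) to be completed to a frame in the first place.
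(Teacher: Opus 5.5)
Your proof is correct, but it reaches the inclusion by a different route from the paper's.

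\textbf{Shared starting point.} Both arguments begin with the relative normal sequence $0\to\op(1+t_{\mathcal{F}}-d)\to\Ng\to\mathcal{I}_Z(d+2)\to 0$. In both, splitness is used exactly to force $Y=\emptyset$.

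\textbf{The paper's route.} The paper dualizes this sequence. Since $\ext^1(\op(1+t_{\mathcal{F}}-d),\op)=0$, the sheaf $\ext^1(\Ng,\op)$ is a quotient of a twist of $\omega_Z$. Its support, which the paper identifies with $\textnormal{Sing}(\mathcal{G})$, therefore lies in $Z$.

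\textbf{Your route.} You avoid duals altogether. Off $Z$ both outer terms are line bundles, so the extension splits locally and $\Ng$ is locally free there. This is more elementary and shows local freeness of $\Ng$ directly, not through $\ext^1(\Ng,\op)$ alone. The paper's identification of $\textnormal{Sing}(\mathcal{G})$ with $\textnormal{Supp}\,\ext^1(\Ng,\op)$ silently discards $\ext^2(\Ng,\op)$. That is harmless, since this sheaf is a twist of $\ext^3(\mathcal{O}_Z,\op)$.

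\textbf{What your minor computation adds.} The paper's argument, like your local one, gives only a set-theoretic inclusion. Your computation with a local frame $e_1=\sigma$, $e_2$ shows that each minor $a_ib_j-a_jb_i$ lies in $(a_1,a_2,a_3)$. This upgrades the result to $\mathcal{I}_Z\subset\mathcal{I}_{\textnormal{Sing}(\mathcal{G})}$, a scheme-theoretic statement the paper does not make.

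\textbf{The input you left as a ``standard fact''.} It follows from the Hilbert--Burch theorem applied to the local resolution $0\to\mathcal{O}^2\xrightarrow{(v_1\,|\,v_2)}\mathcal{O}^3\to\mathcal{I}_Z\to 0$. You only need the inclusion of $\mathcal{I}_Z$ into the ideal of $2\times 2$ minors, which Hilbert--Burch gives directly; you do not need to check that the multiplier is a unit.
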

\begin{proof}
    The exact sequence of the relative normal sheaf would be 
    \begin{equation*}
        0 \longrightarrow \op(1+t_{\mathcal{F}}-d) \to \Ng \longrightarrow \mathcal{I}_Z(d+2) \longrightarrow 0,
    \end{equation*}
    Where \(Z = \textnormal{Sing}(\mathcal{F})\). Dualizing this sequence we have
    \begin{equation*}
        0 \longrightarrow \op(-2-d) \longrightarrow \Ng^{\vee} \longrightarrow \op(d - 1 - t_{\mathcal{F}}) \longrightarrow \omega_Z(d-2) \longrightarrow \ext^1(\Ng,\op) \longrightarrow 0.
    \end{equation*}
    Since the last map is surjective, then every point on \(\textnormal{Supp}(\ext^1(\Ng,\op))\) is on \(Z\). Therefore we have
    \begin{equation*}
        \textnormal{Sing}(\mathcal{G}) = \textnormal{Supp}(\ext^1(\Ng,\op)) \subset \textnormal{Sing}(\mathcal{F}).
    \end{equation*}
\end{proof}

Now let's study the codimension one distributions that contain the foliation by curves vanishing on a line with isolated points. Let \(\mathcal{F}\) be a codimension one distribution on \(\p\) such that the non-trivial section \(\sigma \in H^0(\Tf(t_{\mathcal{F}}-1))\) defines a foliation by curves \(\mathcal{G}\) with \(\textnormal{Sing}_1(\mathcal{G}) = L\) a line. Assuming \(\Tf\) is nonsplit, the zeros of the section \(\sigma\) is non-empty. But \(Y \subseteq L\) by (\ref{Y em sing}), so we obtain \(Y = L\). This means that \(c_2(\Tf(t_{\mathcal{F}}-1)) = 1\) and then
\begin{equation*}
    \begin{cases}
        c_1(\Tf) = 2-d \\
        c_2(\Tf) = -t_{\mathcal{F}}^2 + d(t_{\mathcal{F}}-1) +2 \\
        c_3(\Tf) = d - 2(t_{\mathcal{F}}-1)
    \end{cases}
\end{equation*}
with \(d \geq 2(t_{\mathcal{F}}-1)\). If one wants to classify codimension one distributions by fixing \(t_{\mathcal{F}}\), the above case will always be among the classification for every \(t_{\mathcal{F}} \in \Z_{\geq0}\).

%\paragraph{}
%We may repeat this argument for an arbitrary \(t_{\mathcal{F}}\) and obtain that
%\begin{equation*}
%    0 \leq c_2(\Tf(t_{\mathcal{F}}-1)) \leq \deg(C')
%\end{equation*}
%\begin{equation*}
%    0 \leq c_2(\Ng^{\vee}(d+2)) \leq \deg(C)
%\end{equation*}

\section{Codimension One Distributions with Unstable Tangent Sheaf}\label{section 4}
\paragraph{}
Unstable reflexive sheaves were studied in \cite{sauer1984nonstable}, where the author showed that properties initially considered only for stable sheaves, such as vanishing conditions on cohomology groups and the spectrum, could be extended to unstable sheaves using an invariant called the order of nonstability. Now let's apply Definition \ref{Nonstability} to the tangent sheaf of a codimension one distribution. Let $\mathcal{F}$ be a codimension one distribution of degree $d$ on $\p$ and define \(\varepsilon\) as \(\varepsilon \in \{0,1\}\) and \(\varepsilon \equiv d \ (\textnormal{mod }2)\) . Since $c_1(\Tf) = 2-d$, its normalization is
\begin{equation*}
    (\Tf)_{\eta} = \Tf\big(\frac{d-2- \varepsilon}{2}\big).
\end{equation*}
Now suppose $\Tf$ is unstable of order $r$, then 
\begin{equation*}
    H^0((\Tf)_{\eta}^{\vee}(-r)) \neq 0.
\end{equation*}
Expanding we have that
\begin{equation*}
    H^0((\Tf)_{\eta}^{\vee}(-r)) = H^0 \big(\Tf\big(\frac{d-2 + \varepsilon}{2} - r\big)\big)
\end{equation*}
By the minimality property on \(r\) and Definition \ref{menorseçao}, it follows
\begin{equation*}
    t_{\mathcal{F}} = \frac{d+\varepsilon}{2} - r.
\end{equation*}
Notice that the order of nonstability and \(t_{\mathcal{F}}\) are inversely related, so the more unstable \(\Tf\) is, the lower is the degree of the minimal subfoliation by curves. Since both are positive integers, then \(r \geq 1\) and with Lemma \ref{stabfeixe} we obtain the following result.

\begin{lemma}\label{lemmaintro}
    Let \(\mathcal{F}\) be a codimension one distribution of degree \(d\) on \(\p\) such that \(\Tf\) is nonsplit. Then \(\Tf\) is unstable if and only if \(d \geq 3\) and
    \begin{equation*}
        1 \leq t_{\mathcal{F}} \leq \frac{d-2+ \varepsilon}{2},
    \end{equation*}
    where \(\varepsilon \in \{0,1\}\) and \(\varepsilon \equiv d (\textnormal{mod }2)\). Moreover, \(\Tf\) will be unstable of order \(\frac{d+ \varepsilon}{2} - t_{\mathcal{F}}\).
\end{lemma}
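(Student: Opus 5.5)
The plan is to turn the instability condition into a statement about the first twist of \(\Tf\) that has a section, by means of the identity \(t_{\mathcal{F}} = \frac{d+\varepsilon}{2} - r\) obtained just before the statement. First I will fix the normalization \((\Tf)_\eta = \Tf\big(\frac{d-2-\varepsilon}{2}\big)\). Since \(\Tf\) has rank 2, \(\Tf^{\vee}\cong \Tf(d-2)\), and so
\begin{equation*}
(\Tf)_\eta^{\vee}(-r) \cong \Tf\Big(\frac{d-2+\varepsilon}{2}-r\Big).
\end{equation*}
The maximal \(r\) with \(H^0((\Tf)_\eta^{\vee}(-r))\neq 0\) therefore corresponds to the minimal \(k\) with \(H^0(\Tf(k-1))\neq 0\), that is, to \(t_{\mathcal{F}}\). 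This yields \(r = \frac{d+\varepsilon}{2}-t_{\mathcal{F}}\) for whatever integer \(r\) makes the section nonzero; in particular this number is always defined, though it may be \(\leq 0\).

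Next I will treat the two implications. By Definition \ref{Nonstability} and Lemma \ref{stabfeixe}, unstable means \(r\geq 1\). For \(d\) odd, stability fails exactly when \(H^0((\Tf)_\eta)\neq 0\). For \(d\) even, \(\mu\)-semistability fails exactly when \(H^0((\Tf)_\eta(-1))\neq 0\). In both cases this condition should match \(t_{\mathcal{F}}\leq \frac{d-2+\varepsilon}{2}\), which is equivalent to \(r\geq 1\). I will check the parity bookkeeping directly. For \(d\) even, \((\Tf)_\eta(-1) = \Tf(\frac{d-4}{2})\), so a section there means \(t_{\mathcal{F}}-1\leq \frac{d-4}{2}\), i.e. \(t_{\mathcal{F}}\leq \frac{d-2}{2}\). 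For \(d\) odd, \((\Tf)_\eta = \Tf(\frac{d-3}{2})\), so a section means \(t_{\mathcal{F}}\leq \frac{d-1}{2}\). Here I need to be careful: unstable of order \(r>0\) versus merely not stable when \(c_1\) is odd. I will rely on the paper's convention that unstable means \(r>0\), i.e. \(H^0(\Tf(\frac{d-1}{2}-1))\neq 0\), which gives \(t_{\mathcal{F}}\leq \frac{d-1}{2}\) in both parities, matching the stated bound.

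Finally, the lower bound and the restriction on \(d\). Since \(\Tf\) is nonsplit, Corollary \ref{tf=-1} rules out \(t_{\mathcal{F}}=0\), so \(t_{\mathcal{F}}\geq 1\). Combining this with \(t_{\mathcal{F}}\leq \frac{d-2+\varepsilon}{2}\) forces \(d-2+\varepsilon\geq 2\), hence \(d\geq 3\) (for \(d=3\), \(\varepsilon=1\) gives equality). Conversely, if \(d\geq 3\) and the inequality holds, then \(r\geq 1\), so \(\Tf\) is unstable of the stated order.

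The main obstacle is the odd-\(c_1\) case. There the convention in Definition \ref{Nonstability} counts \(r=0\) as stable, so I must confirm that "unstable" in the statement means \(r\geq 1\) and not merely "not stable". I would first settle this by directly comparing Lemma \ref{stabfeixe} with the remark following Definition \ref{Nonstability}.
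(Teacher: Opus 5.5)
Your proposal is correct and follows essentially the paper's argument: you identify \((\Tf)_\eta^{\vee}(-r)\cong\Tf\big(\frac{d-2+\varepsilon}{2}-r\big)\), match the maximal \(r\) with the minimal twist to get \(t_{\mathcal{F}}=\frac{d+\varepsilon}{2}-r\), and then use \(t_{\mathcal{F}}\geq 1\) (nonsplit, Corollary \ref{tf=-1}) and \(r\geq 1\) (unstable) to obtain the bounds and \(d\geq 3\). The odd-\(c_1\) concern you flag resolves itself: when \(c_1((\Tf)_\eta)=-1\) one has \((\Tf)_\eta^{\vee}(-1)\cong(\Tf)_\eta\), so ``not stable'' (which coincides with ``not \(\mu\)-semistable'' in that parity) is exactly the condition \(r\geq 1\), as your own parity computation already shows.
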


\paragraph{}
The above lemma shows that when \(\Tf\) is nonsplit and unstable, it satisfies the conditions of Lemma \ref{h^0=1}. In other words, if \(\mathcal{F}\) is a codimension one distribution with \(\Tf\) nonsplit and unstable, then the minimal subfoliation by curves is unique.

When \(t_{\mathcal{F}}\) attains its minimum value, \(t_{\mathcal{F}} = 1\), this is exactly when the order of nonstability attains its maximum possible value. The codimension one distributions with this property are classified in Proposition \ref{Tfcomseção}, so we obtain the next result.

\begin{theorem}\label{oteorema}
    Let $\mathcal{F}$ be a codimension one distribution of degree $d$ on $\p$, such that $\Tf$ is nonsplit and nonstable with maximal order of nonstability. Then $d \geq 3$ and
    \begin{enumerate}
        \item $\Tf$ has Chern classes $(2-d,1,d)$, generated by a line, \(\textnormal{Sing}_1(\mathcal{F})\) is a curve of degree \(d^2+1\) with genus \(p_a = d(d-1)^2\) and \(\textnormal{Sing}_0(\mathcal{F})\) consists of \(d\) collinear points; or

        \item $\Tf$ has Chern classes $(2-d,2,2d)$, generated by a pair of skew lines (or a double line of genus -1), \(\textnormal{Sing}_1(\mathcal{F})\) is a curve of degree \(d^2\) (complete intersection) and \(\textnormal{Sing}_0(\mathcal{F})\) consists of two pairs of \(d\) collinear points (or \(d\) collinear points on the double line).
    \end{enumerate}
\end{theorem}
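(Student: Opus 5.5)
We first reduce to Proposition \ref{Tfcomseção}. Since $\Tf$ is nonsplit and unstable, Lemma \ref{lemmaintro} gives $d \geq 3$ and an order of nonstability $r = \frac{d+\varepsilon}{2} - t_{\mathcal{F}}$ with $t_{\mathcal{F}} \geq 1$. The maximal possible order therefore corresponds exactly to $t_{\mathcal{F}} = 1$. Proposition \ref{Tfcomseção} then applies. Case 1 there is split and is excluded, so $\Tf$ has Chern classes $(2-d,1,d)$ or $(2-d,2,2d)$, and in both cases $d\ge 3$.

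Next we describe the curve $Y$ from the sequence $0\to\op\to\Tf\to\mathcal{I}_Y(2-d)\to 0$, which is the "generated by" part of each item. The proof of Proposition \ref{Tfcomseção} already yields this.
\begin{itemize}
\item In case (2,1,d), $Y=L$ is a line and $C=K$ has degree $d^2+1$. The genus follows from Theorem \ref{invariantes}: set $\deg C=d^2+1$ and $c_3=d$, then solve $d=d^3+2d^2+2d-(d^2+1)(3d-2)+2p_a-2$. This gives $2p_a = 2d^3-4d^2+2d$, that is, $p_a=d(d-1)^2$.
\item In the remaining case, $Y$ has degree $2$, and $\Ng^\vee=\op(-2)^{\oplus 2}$ by the classification of degree $1$ foliations by curves. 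Then $\textnormal{Sing}(\mathcal{G})$ is a pair of skew lines or a double line of genus $-1$. By (\ref{Y em sing}), $Y\subset\textnormal{Sing}(\mathcal{G})$, and both have degree $2$ and are pure one-dimensional, so they coincide. Finally, $C=K$ is the complete intersection of two degree $d$ surfaces, namely the zero locus of a section of $\op(d)^{\oplus2}$.
\end{itemize}

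The remaining and hardest step is locating $\textnormal{Sing}_0(\mathcal{F})$. Its length equals $c_3(\Tf)$, which is $d$ or $2d$. We plan to show that it lies on $Y$, using the long exact sequence from the proof of Proposition \ref{Tfcomseção}:
\begin{equation*}
\ext^1(\Ng,\op)\to\omega_Y(2+d)\to\ext^3(\mathcal{U},\op)\to 0.
\end{equation*}
The surjection onto $\ext^3(\mathcal{U},\op)$ shows that $\textnormal{Supp}(\mathcal{U})\subset Y$. Hence the isolated singular points lie on the line $L$, giving $d$ collinear points. In the second case they lie on the two skew lines or on the double line.

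For the finer distribution, namely $d$ points on each line of the skew pair, we localize the same surjection to each component $L_i$. There $\ext^1(\Ng,\op)$ is supported on $\textnormal{Sing}(\mathcal{G})=Y$. We then compare degrees: restrict $\omega_Y(2+d)$ to $L_i$ and count the cokernel from the map out of $\ext^1(\Ng,\op)$, whose restriction to each line we expect to be a line bundle of known degree computed from $\Ng^\vee=\op(-2)^{\oplus2}$. Symmetry together with total length $2d$ should force length $d$ on each line. We expect this local length computation to be the main obstacle. If it does not go through, we would fall back on computing $c_3$ of $\Tf|$ via restriction to each line $L_i$, using the splitting type of $\Tf$ along $L_i$.
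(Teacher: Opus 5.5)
Your reduction is the same as the paper's. The paper proves the theorem only by combining Lemma \ref{lemmaintro} (maximal order means $t_{\mathcal{F}}=1$, and $d\geq 3$) with Proposition \ref{Tfcomseção}, discarding the split case. It leaves the genus, the shape of $Y$ and the location of $\textnormal{Sing}_0(\mathcal{F})$ implicit. Your derivations of these are correct additions:
\begin{itemize}
\item the genus $p_a=d(d-1)^2$ from Theorem \ref{invariantes};
\item $Y=\textnormal{Sing}(\mathcal{G})$, via (\ref{Y em sing}) and equality of degrees;
\item $\textnormal{Supp}(\mathcal{U})\subset Y$, from the surjection $\omega_Y(2+d)\to\ext^3(\mathcal{U},\op)$. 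This holds because $\ext^2(\Ng,\op)=0$, by the resolution $0\to\op\to\tpb\to\Ng\to 0$.
\end{itemize}

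The step that fails as written is the appeal to ``symmetry'' to split the length $2d$ as $d+d$ over the two skew lines. Nothing exchanges $L_1$ and $L_2$ for a given distribution, so this proves nothing.

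Your degree comparison does finish the job once you compute the source of the map. Dualizing $0\to\op\to\tpb\to\Ng\to 0$ and using (\ref{foliporcurvas}) gives $\ext^1(\Ng,\op)\cong\mathcal{O}_{\textnormal{Sing}(\mathcal{G})}=\mathcal{O}_Y$. Since $Y=L_1\sqcup L_2$, the map $\mathcal{O}_Y\to\omega_Y(d+2)$ splits into maps $\mathcal{O}_{L_i}\to\mathcal{O}_{L_i}(d)$. Each is nonzero because the cokernel $\ext^3(\mathcal{U},\op)$ is zero-dimensional. Hence each cokernel has length exactly $d$.

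A shorter route avoids $\mathcal{G}$ altogether:
\begin{itemize}
\item Dualizing $0\to\op\to\Tf\to\mathcal{I}_Y(2-d)\to 0$ gives $\op\xrightarrow{\xi}\omega_Y(d+2)\to\ext^1(\Tf,\op)\to 0$, where $\xi$ is the extension class.
\item Dualizing (\ref{eq1}) gives $\ext^1(\Tf,\op)\cong\ext^3(\mathcal{U},\op)$.
\item So $\textnormal{Sing}_0(\mathcal{F})$ is the zero scheme of $\xi$ on $Y$. In the first family this is a degree $d$ divisor on $L$. In the second it is a degree $d$ divisor on each $L_i$, since each component of $\xi$ in $H^0(\mathcal{O}_{L_1}(d))\oplus H^0(\mathcal{O}_{L_2}(d))$ is nonzero.
\end{itemize}

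Drop your fallback via ``$c_3$ of $\Tf$ restricted to $L_i$''. A sheaf on a line has no third Chern class, and the fallback is not needed.
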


%\begin{theorem}
%    Let $\mathcal{G}$ be a foliation by curves of degree 1 on $\p$. Then
%    \begin{enumerate}
%        \item $\Ng^{\vee}$ is a $\mu$-stable rank 2 reflexive sheaf with $(c_1,c_2,c_3) = (-4,6,4)$, with $\textnormal{Sing}(\mathcal{G})$ a 0-dimension scheme of length 4;

%        \item $\Ng^{\vee}$ is a strictly $\mu$-semistable reflexive sheaf with $(c_1,c_2,c_3) = (-4,5,2)$ and given by an extension 
%        \begin{equation*}
%            0 \longrightarrow \op(-2) \longrightarrow \Ng^{\vee} \longrightarrow \mathcal{I}_L(-2) \longrightarrow 0,
%        \end{equation*}
%        where $L$ is a line. In this case, $\textnormal{Sing}(\mathcal{G}) = L' \cup p$ where $L'$ is a line and $p$ is 0-dimensional scheme of length 2, in particular $L \neq L'$;
        
%        \item $\Ng^{\vee} = \op(-2) \oplus \op(-2)$, and $\textnormal{Sing}(\mathcal{G})$ consists of either 2 skew lines or a double line of genus $-1$. Here $\textnormal{Sing}(\mathcal{G})$ is either 2 skew lines or a double line of genus 1.
%    \end{enumerate}
%\end{theorem}

\paragraph{}
\textbf{First Family with Maximal Order. }Here we will discuss the existence and properties of the distributions on the first family. Let \(\mathcal{F}\) be a codimension one distribution on the first family with maximal order of nonstability
\begin{equation*}
    0 \longrightarrow \Tf \longrightarrow \tpb \longrightarrow \mathcal{I}_Z(d+2) \longrightarrow 0.
\end{equation*}
So \(\Tf\) has Chern classes \((2-d,1,d)\) and is generated by an extension of a line. Consider \(F\) a rank 2 reflexive reflexive sheaf generated by the extension of a line \(L \subset \p\)
\begin{equation}\label{seq fam 1}
    0 \longrightarrow \op \longrightarrow F \longrightarrow \mathcal{I}_L(2-d) \longrightarrow 0.
\end{equation}
Now we may consider the exact sequence of the line
\begin{equation*}
    0 \longrightarrow \op(-2) \longrightarrow \op(-1)^{\oplus 2} \longrightarrow \mathcal{I}_L \longrightarrow 0,
\end{equation*}
and using the cohomology of the invertible sheaves on \(\p\) we obtain the vanishings:
\begin{equation*}
    H^1(\mathcal{I}_L) = H^2(\mathcal{I}_L(-1)) = 0.
\end{equation*}
Finally, using the above vanishings and Serre duality, we have
\begin{equation*}
    H^i(F(d-1-i)) = 0 \ \ \textnormal{for} \ \  i>0.
\end{equation*}
Therefore, by Corollaty \ref{regular} every rank 2 reflexive sheaf generated as (\ref{seq fam 1}) will be the tangent sheaf of a codimension one distribution of degree \(d\). 

We may calculate  $\Ext^1(\mathcal{I}_L(2-d),\op)$ by the exact sequence of the line and obtain that it has dimension $d+1$ and since the grassmaniann parametrizing lines on \(\p\) has dimension 4, there is a variety \(\mathbb{V}\) of dimension \(d +4\) that parametrize isomorphism classes of sheaves generated as (\ref{seq fam 1}). 

%Our goal is to calculate the moduli spaces $D(d,1,d)$. We can show irreducibility using the isomorphism
%\begin{equation*}
%    \Hom(F,\tpb) \simeq H^0(F^* \otimes \tpb) \simeq H^0(F(d-2) \otimes \tpb).
%\end{equation*}
%The twisted Euler sequence
%\begin{equation*}
%    0 \longrightarrow F(d-2) \longrightarrow F(d-1)^{\oplus 4} \longrightarrow F(d-2) \otimes \tpb \longrightarrow 0,
%\end{equation*}
%shows that
%\begin{align*}
%    h^0(F(d-2) \otimes \tpb) &= 4h^0(F(d-1)) - h^0(F(d-2)) \\
%    &= 4h^0(\op(d-1)) + 4h^0(\mathcal{I}_L(1)) - h^0(\op(d-2)) \\
%    &= 4 \binom{d+2}{3} + 8 - \binom{d+1}{3} \\
%    &= \frac{1}{2}d(d+1)(d+3) + 8.
%\end{align*}
%We obtain,
%\begin{equation*}
%    \dim(\Hom(F,\tpb)) = \frac{1}{2}d(d+1)(d+3) + 8, \ \ \forall \ F \ \textnormal{given by} \ (\ref{seq fam 1}). \triangle
%\end{equation*}

Returning to the distribution \(\mathcal{F}\), we have that $H^1_*(\Tf) = 0$ and therefore $Z = \textnormal{Sing}(\mathcal{F})$ is not contained in a hypersurface of degree $d$. Also, $\textnormal{Sing}_1(\mathcal{F}) = C$ have the following invariants
\begin{equation*}
    \deg(C) = d^2 + 1 \ \ \textnormal{and} \ \ p_a(C) = d(d-1)^2.
\end{equation*}
A possible example for \(C\) would be the union of a complete intersection of degree \(d^2\) with a line intersecting it in \(d\) distinct points.

\begin{example}
    Consider the codimension one distribution \(\mathcal{F}\) of degree 3 defined by de 1-form in homogeneous coordinates \((x,y,z,w)\):
    \begin{align*}
        \omega &= (xy^2z - xyz^2 + yw^3 - zw^3)dx \\
        &+ (-x^2yz + xyz^2 - 2x^3w - 2y^3w + 3xyzw - 2z^3w -xw^3 + zw^3 + 3w^4)dy \\
        &+ (x^2yz - xy^2z + xw^3 - yw^3)dz \\
        &+ (2x^3y + 2y^4 + -3xy^2z + 2yz^3 - 3yw^3)dw.
    \end{align*}
    Then \(\textnormal{Sing}_0(\mathcal{F})\) has length 3 and \(\textnormal{Sing}_1(\mathcal{F})\) is the union of the complete intersection defined by the ideal \((x^3+y^3+z^3, xyz + w^3 )\) with the line \(L = \{y = w = 0\}\).
\end{example}

%\begin{equation*}
%    \textcolor{red}{\Hom(\Tf,\Nf) = \Hom(F,\mathcal{I}_Z(d+2)) = ?} (\textnormal{Tangent Space})
%\end{equation*}
%\begin{equation*}
%    \textcolor{red}{\Ext^1(\Tf,\Nf) = \Ext^1(F,\mathcal{I}_Z(d+2)) = ?}(\textnormal{Obstruction}) 
%\end{equation*}

%\begin{align*}
%    \Hom(F,F) &\leq \Hom(F,\op) + \Hom(F,\mathcal{I}_L(2-d))\\
%    &\leq h^0(F(d-2)) + \Ext^1(F,\op(-d)) \\
%    &\leq h^0(F(d-2)) + h^2(F(d-4))
%\end{align*}

%Using the classification of distributions with degree $\leq 2$, we may describe the low degree cases
%\begin{enumerate}
%    \item For $d = 0$, we will have that $\Tf$ has invariants $(2,1,0)$ and will be forced to be split, so
%    \begin{equation*}
%        \Tf = \op(1) \oplus \op(1)
%    \end{equation*}

%    \item For $d = 1$, we will have $\Tf$ with $(1,1,1)$, stable and $\textnormal{Sing}_1(\mathcal{F})$ will be a conic.

%    \item For $d = 2$, we will have $\Tf$ with $(0,1,2)$, strictly $\mu$-semistable and $\textnormal{Sing}_1(\mathcal{F})$ is going to be a ACM quintic of genus 2. Among these distributions we have logarithmic foliations $\mathcal{L}(1,1,2)$.
%\end{enumerate}
\paragraph{}
\textbf{Second Family with Maximal Order. }Now let's study the second family on Theorem \ref{oteorema} given by
\begin{equation*}
    0 \longrightarrow \Tf \longrightarrow \tpb \longrightarrow \mathcal{I}_Z(d+2) \longrightarrow 0,
\end{equation*}
where $\Tf$ has Chern classes \((2-d,2,2d)\). Let \(F\) be a rank 2 reflexive sheaf generated as an extension
\begin{equation}\label{seq def 2}
    0 \longrightarrow \op \longrightarrow F \longrightarrow \mathcal{I}_Y(2-d) \longrightarrow 0,
\end{equation}
where $Y$ is a pair of skew lines (or a double line of genus -1). Since \(H^0(\mathcal{I}_Y(1)) = 0\), the sheaf \(F(d-1)\) cannot be globally generated and is not guarantee that every sheaf generated by this extension is the tangent sheaf of a codimension one distribution.

%\paragraph{}
%\(\triangle\) If $Y$ is the disjoint union of two lines $L_1$ and $L_2$, therefore
%\begin{equation*}
%    \omega_Y \simeq \omega_{L_1} \oplus \omega_{L_2} \simeq \mathcal{O}_{L_1}(-2) \oplus \mathcal{O}_{L_2}(-2)
%\end{equation*}
%so we may calculate the extension of the second family,
%\begin{equation*}
%    H^0(\omega_Y(4-c_1)) = H^0(\omega_Y(d+2)) = H^0(\mathcal{O}_{L_1}(d)) \oplus H^0(\mathcal{O}_{L_2}(d)).
%\end{equation*}
%Using that $h^0(\mathcal{O}_L(d)) = d+1$, for a line $L$, then
%\begin{equation*}
%    \dim(\Ext^1(\mathcal{I}_Y(2-d),\op)) = 2d +2.
%\end{equation*}
%Skew lines are parametrized by a 6 dimensional variety, and so we have a variety of dimension $2d+7$ parameterizing the sheaves in the second family. For $D(d,2,2d)$, The twisted Euler sequence
%\begin{equation*}
%    0 \longrightarrow F(d-2) \longrightarrow F(d-1)^{\oplus 4} \longrightarrow F(d-2) \otimes \tpb \longrightarrow 0,
%\end{equation*}
%shows that
%\begin{align*}
%    h^0(F(d-2) \otimes \tpb) &= 4h^0(F(d-1)) + h^1(F(d-2)) - h^0(F(d-2)) \\
%    &= 4h^0(\op(d-1)) + h^1(\mathcal{I}_Y) - h^0(\op(d-2)) \\
%    &= 4 \binom{d+2}{3} + 1 - \binom{d+1}{3} \\
%    &= \frac{1}{2}d(d+1)(d+3) + 1.
%\end{align*}
%We obtain,
%\begin{equation*}
%    \dim(\Hom(F,\tpb)) = \frac{1}{2}d(d+1)(d+3) + 1, \ \ \forall \ F \ \textnormal{given by} \ (\ref{seq def 2}). \triangle
%\end{equation*}
We may calculate that \(\Ext^1(\mathcal{I}_Y(2-d),\op)\) has dimension \(2d + 2\). Since skew lines are parametrized by the symmetric product of the grassmaniann parametrizing lines on \(\p\) without the diagonal, they are parametrized by a variety of dimension 6. So there is variety of dimension \(2d+7\) parametrizing isomorphism classes of sheaves generated as (\ref{seq def 2}).

The distribution \(\mathcal{F}\) has $\textnormal{Sing}_1(\mathcal{F}) = C$ a complete intersection of degree $d^2$. Since the only nonzero first cohomology group of \(\Tf\) is \(h^1(\Tf(d-2)) = 1\), which implies that \(H^1(\Tf(-2)) = 0\) and we obtain that \(\textnormal{Sing}(\mathcal{F})\) is not contained in a hypersurface of degree \(d\). We cannot guarantee the existence for every sheaf generated as (\ref{seq def 2}), but we may construct an explicitly example vanishing where we want.

\begin{example}
    Consider the codimension one distribution \(\mathcal{F}\) of degree 3 defined by the 1-form in homogeneous coordinates
    \begin{align*}
        \omega &= (x^3y +y^4- x^3z + xy^2z - y^3z + xyz^2 + yz^3 - z^4 - 2xyzw + yw^3 + zw^3 - 2w^4)dx \\
        &+ (-x^4 - xy^3 + x^3z - x^2yz + y^3z + 3xyz^2 - xz^3 \\
        &+ z^4 - 2x^3w - 2y^3w + xyzw -2z^3w - xw^3 + 3zw^3 + w^4)dy \\
        &+ (x^4 - x^3y + xy^3 - y^4 - x^2yz - 3xy^2z + xz^3 \\
        &- yz^3 + 2x^3w + 2y^3w + xyzw + 2z^3w - xw^3 - 3yw^3 + w^4)dz \\
        &+ (2x^3y + 2y^4 - 2x^3z + 2x^2yz - xy^2z - 2y^3z - xyz^2 + 2yz^3 - 2z^4 + 2xw^3 - yw^3 - zw^3)dw.
    \end{align*}
    Then \(\textnormal{Sing}_0(\mathcal{F})\) has length 6 and \(\textnormal{Sing}_1(\mathcal{F})\) is the complete intersection of degree 9 defined by the ideal \((x^3+y^3+z^3, xyz + w^3 )\).
\end{example}

\paragraph{}
Notice that in our classification the only locally free sheaves that appear are split or the nullcorrelation bundle in Example \ref{nullcorrelation}, which is stable and does not define a codimension one foliation. We may construct an example of a codimension one distribution with locally free tangent sheaf that is nonsplit and unstable. 
\begin{example}
    Take \(L \subset \p\) a line and \(Y\) a double structure on \(L\) with genus \(p_a = -4\), for multiple structures on a line see \cite[Proposition 1.4]{nollet1997hilbert}. Now consider the extension
\begin{equation}\label{fibradounstable}
    0 \longrightarrow \op \longrightarrow F \longrightarrow \mathcal{I}_Y(-1) \longrightarrow 0.
\end{equation}
Then \(F\) is an unstable rank 2 locally free sheaf with Chern classes \((c_1,c_2) = (-1,2)\). Using Serre Duality, we obtain
\begin{equation*}
    H^3(F(m))^{\vee} = H^0(F(-m-3)), \ \ \textnormal{for all } m \in \Z .
\end{equation*}
So we have the vanishings
\begin{equation*}
    H^3(F(m)) = 0, \ \ \textnormal{for } m\geq-2.
\end{equation*}
The exact sequence 
\begin{equation*}
    0 \longrightarrow \mathcal{O}_L(3) \longrightarrow \mathcal{O}_Y \longrightarrow \mathcal{O}_L \longrightarrow 0
\end{equation*}
together with the structural exact sequence gives us
\begin{align*}
    H^1(\mathcal{I}_Y(m)) &= 0, \ \ \textnormal{for } m\geq 2; \\
    H^2(\mathcal{I}_Y(m)) &= 0, \ \ \textnormal{for } m \geq -1.
\end{align*}
This vanishings with the exact sequence (\ref{fibradounstable})
\begin{align*}
    H^1(F(m)) &= 0, \ \ \textnormal{for } m\geq 3; \\
    H^2(F(m)) &= 0, \ \ \textnormal{for } m \geq 0.
\end{align*}
All this vanishings on the cohomology of \(F\) gives us the identity
\begin{equation*}
    H^i(F(4-i)) = 0, \ \ \textnormal{for } i >0.
\end{equation*}
So \(F\) is 4-regular and therefore \(F(4)\) is globally generated. Applying the Bertini type Theorem \cite[Corollary A.4]{CCJ}, we obtain that there exists a codimension one distribution \(\mathcal{F}\) of degree 7 such that
\begin{equation*}
    \Tf \simeq (F(4))^{\vee}(1) \simeq F(-2).
\end{equation*}
Then \(\Tf\) is an unstable locally free sheaf that is nonsplit and it has \(t_{\mathcal{F}} = 2\).
\end{example}

\section{Final Remarks}\label{section 5}

%\subsection{Logarithmic Foliations}
\paragraph{}
Logarithmic foliations form a well-know familiy of codimension one foliations, they form a irreducible component in every space of codimension one foliations, see \cite{calvo1994irreducible}. A codimension one distribution \(\mathcal{F}\) of degree \(d\) on \(\p\) is called a logarithmic foliation of type \((d_1.\dots,d_r)\) if is given by a 1-form 
\begin{equation}\label{1-form log}
    \omega = f_1 \dots f_r \sum_{i=1}^r \lambda_i \frac{df_i}{f_i}.
\end{equation}
such that
\begin{itemize}
            \item \(1 \leq d_1 \leq \dots \leq d_r\);
            
            \item \(\lambda_i \in \C\) and \(\sum \lambda_i d_i = 0\);

            \item \(f_i\) is a homogeneous polynomial of degree \(d_i\) and \(\sum d_i = d+2\).
        \end{itemize}
We say that \(\mathcal{F}\) is a generic logarithmic foliation if in addition we have that \(\lambda_i \neq 0\) for every \(i\) and each \(D_i := \{ f_i = 0\}\) is in general position, i.e, intersects each other transversely. In \cite[Theorem 3.]{cukierman2006singularities} the authors proved that if \(\mathcal{F}\) is a generic logarithmic foliation given by (\ref{1-form log}), then
\begin{equation*}
    \textnormal{Sing}(\mathcal{F}) = C \cup U.
\end{equation*}
Where \(U\) is the zero dimensional component with length
\begin{equation*}
    N(3,h) = \textnormal{coefficient of} \ h^3 \ \textnormal{in} \ \ \frac{(1-h)^4}{(1-d_1h)\dots(1-d_rh)}
\end{equation*}
and \(C\) is the one dimensional component given by
\begin{equation*}
    C = \bigcup_{i < j}(D_i \cap D_j).
\end{equation*}

With this theorem we obtain an explicit formula for the invariants of the singularities of such foliations 
\begin{equation*}
    \deg(C) = \sum_{i < j}d_id_j,
\end{equation*}
and we may compare with the ones obtained in Theorem \ref{oteorema}.

\begin{lemma}
    There are no generic logarithmic foliations among the two families with maximal order of nonstability.
\end{lemma}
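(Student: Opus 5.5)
We compare the invariants of a generic logarithmic foliation of type \((d_1,\dots,d_r)\) with those forced by Theorem \ref{oteorema}. By Theorem \ref{oteorema} we may assume \(d \geq 3\), so \(\sum d_i = d+2 \geq 5\). We also need \(r \geq 2\), since \(\sum \lambda_i d_i = 0\) with all \(\lambda_i \neq 0\). By \cite[Theorem 3.]{cukierman2006singularities}, \(\textnormal{Sing}_1(\mathcal{F}) = C = \bigcup_{i<j}(D_i \cap D_j)\) and \(\deg(C) = \sum_{i<j} d_i d_j = e_2(d_1,\dots,d_r)\). Theorem \ref{oteorema} requires \(\deg(C) = d^2+1\) in the first family and \(\deg(C) = d^2\) in the second. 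The plan is to show that \(e_2 < d^2\) is forced, which rules out both families at once.

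\textbf{Key estimate.} Write \(s = \sum d_i = d+2\). Then
\begin{equation*}
    e_2 = \frac{s^2 - \sum d_i^2}{2}.
\end{equation*}
For fixed \(s\) and \(r \geq 2\), this is maximized when the \(d_i\) are as balanced as possible. Comparing \(r=2\) with more factors, splitting a part only adds positive cross terms, so the maximum actually occurs with all \(d_i = 1\): then \(r = s\) and \(e_2 = s(s-1)/2\). That maximum is not small, so the crude bound is useless and the structure of \(C\) must be used instead.

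\textbf{Refined approach.} In the second family, Theorem \ref{oteorema} says \(C\) is a complete intersection of two surfaces of degree \(d\). For a generic logarithmic foliation, \(C\) is a union of the complete intersections \(D_i \cap D_j\), which has a prescribed number of components. With \(r \geq 3\), \(C\) is reducible, and it is connected in a configuration whose genus can be computed from the \(D_i\). So we will compare \(p_a\), or equivalently use \(c_2(\Tf)\) and \(c_3(\Tf)\).

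The cleanest route is through \(c_3(\Tf) = \textnormal{length}(\mathcal{U}) = N(3,h)\). The family requires \(c_3 = d\) or \(c_3 = 2d\), and \(c_2 = d^2 + 2 - e_2\) must equal \(1\) or \(2\). So we solve \(e_2 = d^2+1\) or \(e_2 = d^2\), where \(d = s-2\), together with the corresponding \(c_3\). The case \(r = 2\) is immediate: \(d_1 + d_2 = d+2\) gives \(d_1 d_2 \leq (d+2)^2/4\). That bound is below \(d^2\) exactly when \(d+2 < 2d\), i.e. for \(d \geq 3\), so \(r=2\) is excluded. For \(r \geq 3\), we will expand \(N(3,h)\) as \(-e_3 + \dots\), a polynomial in the elementary symmetric functions of the \(d_i\). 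Substituting \(e_1 = d+2\) and \(e_2 \in \{d^2, d^2+1\}\) leaves a single relation between \(e_3\) and \(d\).

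\textbf{Main obstacle.} The hard step is this Diophantine comparison for \(r \geq 3\). We first try to show directly that \(e_2 \geq d^2\) forces many \(d_i = 1\), since \(e_2 \approx s^2/2\) is needed to exceed \((s-2)^2\) only for small \(s\). This reduces to finitely many small cases, roughly \(s \leq 6\), to be checked by hand against \(c_3\). As a fallback, the reducibility of \(C\) into \(\binom{r}{2}\) complete intersections contradicts \(C\) being a complete intersection of two degree-\(d\) surfaces in family 2. In family 1, the \(d\) collinear isolated points together with \(N(3,h)\) give the contradiction.
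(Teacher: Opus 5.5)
Your proposal has a genuine gap. The only case that actually needs an argument is left as ``to be checked by hand'', and the fallbacks you offer for it are not valid.

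The plan of forcing $e_2<d^2$ is false as stated. For $d=3$, the types $(1,1,1,1,1)$ and $(1,1,1,2)$ give $\deg(C)=10=d^2+1$ and $\deg(C)=9=d^2$. At the same time, your dismissal of the crude bound overshoots. Since $d_i^2\geq d_i$, you have $e_2\leq\binom{d+2}{2}$, and $\binom{d+2}{2}<d^2$ exactly when $d^2-3d-2>0$, i.e.\ for every $d\geq 4$. So the degree alone excludes both families for $d\geq 4$. Listing the partitions of $5$ with $r\geq 2$ (which give $\deg(C)\in\{4,6,7,8,9,10\}$) reduces everything to exactly those two types at $d=3$.

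For comparison, the paper's proof uses only the degree, via the bound $\frac{r-1}{2r}(d+1)^2$. Since $\sum d_i=d+2$, this should read $\frac{r-1}{2r}(d+2)^2$. The subsequent estimate $\frac12(d+2)^2<d^2$ then fails for $d=3,4$, and at $d=3$ no argument based on $\deg(C)$ alone can work. Your instinct that a second invariant is needed is therefore correct, but the proof is not complete until you supply it.

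The missing computation is short.
For $(1,1,1,1,1)$, $c_3(\Tf)=N(3,h)$ is the coefficient of $h^3$ in $(1-h)^4/(1-h)^5=1/(1-h)$, namely $1\neq 3=d$. So this type is not in the first family.
For $(1,1,1,2)$, it is the coefficient of $h^3$ in $(1-h)/(1-2h)$, namely $8-4=4\neq 6=2d$. So it is not in the second family.
Equivalently, by Theorem \ref{invariantes}, $p_a(C)=11\neq 12$ and $p_a(C)=9\neq 10$.

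Your fallbacks cannot replace this step. A complete intersection of two surfaces of degree $d$ can perfectly well be reducible, so reducibility of $C$ yields no contradiction. The remark about $d$ collinear points in the first family is not an argument until the relevant number is actually computed. If you prefer a geometric argument for $(1,1,1,2)$, let $Q$ be the quadric and $H_1,H_2,H_3$ the planes. A cubic vanishing on $C$ restricts on each $H_j$ to a plane cubic containing the conic $Q\cap H_j$ and two lines, hence vanishes on $H_j$. It is therefore a multiple of the product of the three linear forms, so $h^0(\mathcal{I}_C(3))=1$, whereas a complete intersection of two cubics has $h^0(\mathcal{I}_C(3))=2$.

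Finally, a minor slip: $N(3,h)=e_1^3-2e_1e_2+e_3-4e_1^2+4e_2+6e_1-4$, so $e_3$ enters with a plus sign.
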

\begin{proof}
    Let \(\mathcal{F}\) be a generic logarithmic foliation of type \((d_1,\dots,d_r)\) and denote
    \(C\) 1-dimensional component of \(\mathcal{F}\).
    Then we have 
    \begin{equation*}
        \deg(C) = \sum_{i<j}d_id_j \leq \frac{r(r-1)}{2r^2}(d+1)^2.
    \end{equation*}
    But notices that
    \begin{equation*}
        \frac{r(r-1)}{2r^2}(d+1)^2 < \frac{1}{2}(d+1)^2 < d^2 + i, \ \ \textnormal{for } d\geq3,
    \end{equation*}
    where \(i \in \{0,1\}\). This means that \(\deg(C)\) cannot attain the value of the singularities in Theorem \ref{oteorema} and therefore the result follows.
\end{proof}

\paragraph{}
\textbf{Acknowledgments.} The author would like to thank his advisors Alan Muniz and Marcos Jardim for insightful discussions on this work. This project was finished at the University of Bari, under the supervision of Mauricio Côrrea, which the author is grateful for the hospitality and support. He is supported by a Phd scholarship of Coordenação de Aperfeiçoamento de Pessoal de Nível Superior - Brasil (CAPES) - Finance Code 001, as well as by the CAPES–PDSE Program.

\noindent (Pedro Pfarrius Barbassa) IMECC-UNICAMP, Departamento de Matemática,  
 R. Sérgio Buarque de Holanda, 651 - Cidade Universitária, Campinas - SP, 13083-859, Brasil.
 
\noindent\textit{Email address:} \texttt{p185839@dac.unicamp.br}
\end{document}